\newtheorem{theorem}{Theorem}[section]
\newtheorem{definition}[theorem]{Definition}
\newtheorem{corollary}[theorem]{Corollary}
\newtheorem{proposition}[theorem]{Proposition}
\newtheorem{lemma}[theorem]{Lemma}
\title[Non-arithmetic monodromy of higher hypergeometric functions]
{Non-arithmetic monodromy\\of higher hypergeometric functions}
\author{John R.~Parker}
\date{\today}                                           
\begin{document}

\begin{abstract}
We show that all the currently known non-arithmetic lattices in ${\rm PU}(2,1)$
are monodromy groups of higher hypergeometric functions.
\end{abstract}

\maketitle

\section{Introduction}

\noindent
A classical result of Schwarz \cite{HAS} says that any (orientation
preserving) triangle group is the monodromy a hypergeometric differential
equation. That is, it is the group of linear maps that record how
pairs of hypergeometric functions solving this equation vary when
analytically continued around a singularity of the equation.
This generalises the well known fact that the modular group 
${\rm PSL}(2,{\mathbb Z})$ is the monodromy group of an elliptic function.
The latter connection raises questions about the arithmetic nature of
such monodromy groups. Vinberg \cite{Vinberg} and Takeuchi \cite{KT-1} 
gave criteria for the arithmeticity of reflection groups and 
Fuchsian groups, respectively. Subsequently, 
Takeuchi \cite{KT} showed that all but finitely
many triangle groups in ${\rm PSL}(2,{\mathbb R})$ are non-arithmetic.
Combining the results of Schwarz and Takeuchi gives infinitely many
non-arithmetic hypergeometric monodromy groups.

We now discuss two generalisations of Schwarz's result about hypergeometric
monodromy.
First, in \cite{DM} and \cite{Mostow-IHES} Deligne and Mostow considered the
monodromy of hypergeometric functions in $n$ variables, originally
constructed by Picard \cite{Picard}. These 
monodromy groups live in ${\rm PU}(n,1)$ (the case $n=1$ gives
the classical case since ${\rm PU}(1,1)$ is conjugate to 
${\rm PSL}(2,{\mathbb R})$). Mostow \cite{Mostow-RCP} generalised
Vinberg's arithmeticity criterion to ${\rm PU}(n,1)$ and gave the first examples
of non-arithmetic lattices in ${\rm PU}(2,1)$. Deligne and Mostow
showed that Mostow's lattices are monodromy groups of second order 
hypergeometric equations in $2$ variables and they produced other examples. In 
particular, for many years all the known examples of non-arithmetic
lattices in ${\rm PU}(2,1)$ and ${\rm PU}(3,1)$ were contained in the
Deligne-Mostow list, and hence were monodromy groups for hypergeometric
functions. (It is an open question whether ${\rm PU}(n,1)$ contains
non-arithmetic lattices for $n\ge 4$.)

In \cite{DPP-1} and \cite{DPP} Deraux, Paupert and I gave some new examples 
of non-arithmetic lattices in ${\rm PU}(2,1)$, which are not 
commensurable to groups on the Deligne-Mostow list. 
(Recall, that two groups are said to be commensurable if, after 
conjugating one of them if necessary, their intersection has finite index 
in each of them. This is the natural notion of invariance for
non-arithmetic lattices.) When discussing these examples, a
question we have frequently been asked is whether
the new non-arithmetic lattices constructed in \cite{DPP-1} and \cite{DPP} 
arise as monodromy groups for any functions or differential equations.
In this paper we give an answer to this question.
For completeness, we briefly mention that 
Deraux \cite{Deraux-abelian}, \cite{Deraux-CHL} has shown that
some of the new lattices we construct in \cite{DPP-1} and \cite{DPP} 
had already been found by 
Couwenberg, Heckman and Looijenga \cite{CHL}, who do not discuss their
arithmetic properties. 

A second generalisation of hypergeometric monodromy was studied in detail 
by Beukers and Heckman \cite{BH}. They consider higher order hypergeometric 
equations in one variable, first constructed by Thomae \cite{Thomae}. 
In particular, they give a characterisation 
of hypergeometric groups due to Levelt \cite{Le} and also
a method of calculating the signature of the Hermitian form 
preserved by such a group. We discuss this in 
Section~\ref{sec-higher-monodromy} below. Arithmetic monodromy groups of
these higher hypergeometric equations have been studied by
Singh and Venkataramana \cite{SV} and by Fuchs, Mieri and Sarnak
\cite{FMS}. 

Our main result is:

\begin{theorem}\label{thm-main}
With the exception of ${\mathcal T}(p,{\bf E}_2)$ for $p=3,\,6,\,12$, 
all the lattices in ${\rm PU}(2,1)$ constructed by Deraux, Parker and
Paupert in \cite{DPP} are commensurable to monodromy groups of third order
hypergeometric equations. 
\end{theorem}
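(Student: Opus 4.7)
The plan is to prove the theorem in three stages: extract eigenvalue invariants from each lattice, realise them as the data of a hypergeometric monodromy, and promote numerical coincidence to commensurability.

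First, I would compute the relevant spectral data for the DPP lattices. Each ${\mathcal T}(p,{\bf T})$ is generated by a pair of complex reflections $R_1,R_2$ of order $p$ together with a regular elliptic $J$ of order three that normalises the pair, where the angles of the sporadic Mostow triangle ${\bf T}$ encode the remaining phase information. Writing down the eigenvalues of $J$, of $R_1R_2$, and of a suitable braid-like word such as $R_1J$ as explicit rational functions of $p$ and of the angle parameters of ${\bf T}$ produces two triples of rationals $(\alpha_1,\alpha_2,\alpha_3)$ and $(\beta_1,\beta_2,\beta_3)$ in ${\mathbb Q}/{\mathbb Z}$, which are the candidate Beukers--Heckman parameters of a third order hypergeometric equation in one variable.

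Second, using the Levelt--Beukers--Heckman description recalled in Section~\ref{sec-higher-monodromy}, I would form the hypergeometric group $H(\alpha,\beta)\subset{\rm GL}(3,{\mathbb C})$ with these parameters and apply the interlacing criterion of \cite{BH} to verify that its invariant Hermitian form has signature $(2,1)$. The three excluded cases ${\mathcal T}(p,{\bf E}_2)$ with $p\in\{3,6,12\}$ should drop out precisely at this step: for those combinations I expect two of the $\alpha_j$ or $\beta_j$ to coincide, the candidate hypergeometric equation to become reducible, and $H(\alpha,\beta)$ to fail to be the kind of irreducible $(2,1)$-group one needs. Ruling out only these three exceptions, rather than more, is a nontrivial case check but is governed by an explicit arithmetic inequality on the candidate rationals.

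Third, and this is the main obstacle, one must establish genuine commensurability rather than mere matching of eigenvalue multisets. Beukers--Heckman pin $H(\alpha,\beta)$ down only up to conjugation in ${\rm GL}(3,{\mathbb C})$, whereas the DPP lattice comes equipped with a concrete Hermitian form from \cite{DPP}. My approach would be to produce, inside the standard Levelt generators of $H(\alpha,\beta)$, explicit words whose matrices (in a suitable conjugate of $H(\alpha,\beta)$) satisfy the defining relations of an identified finite index subgroup of ${\mathcal T}(p,{\bf T})$, enforcing the matching of Hermitian forms by rescaling. Commensurability would then follow either by exhibiting a common finite index subgroup via this explicit identification, or by comparing adjoint trace fields and traces of generic elements together with a covolume argument using the known covolumes from \cite{DPP}. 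I expect the different sporadic triangles ${\bf T}$ to require separate case analysis, and the careful choice of conjugating matrix aligning the Beukers--Heckman normal form with the DPP Hermitian form will absorb most of the technical work.
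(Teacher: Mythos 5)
There is a genuine gap, and it sits exactly where you flag the ``main obstacle.'' Your plan forms the abstract hypergeometric group $H(\alpha,\beta)$ from matched eigenvalue data and then tries to promote the match to commensurability by finding words satisfying defining relations, or by comparing trace fields and covolumes. Neither route closes: words satisfying the relations of a finite-index subgroup only give a homomorphic image, and equality of trace fields together with commensurable covolumes does not imply commensurability. The idea you are missing is to run Levelt's characterisation (Theorem~\ref{thm-levelt}) in the other direction: starting from the explicit DPP matrices, exhibit a two-element generating set for the lattice itself or for an explicit finite-index overgroup --- $\langle R_1,J\rangle$ in the three-fold case, $\langle R_1,Q\rangle$ in the two-fold case, $\langle R_2,R_1R_2R_3\rangle$ and $\langle R_3,R_1R_2\rangle$ via braid-relation manipulations for the $(3,3,4;n)$ and $(2,3,n;n)$ families, $\langle R_3,SR_1\rangle$ for $(3,4,4;4)$ with $3\nmid p$ --- and conjugate the two generators into the companion normal form with $BA^{-1}$ a complex reflection. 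Then the group generated \emph{is} (conjugate to) the hypergeometric monodromy group with those parameters, and commensurability is automatic because the reflection group sits inside it with index $1$ or $3$; no matching of Hermitian forms, relations, or covolumes is needed. Note also that your structural description (reflections normalised by an order-three $J$) only covers the three-fold symmetric family; the other families require the separate group-theoretic lemmas just mentioned.

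Your account of the exceptional cases is also off the mark. The exclusion of ${\mathcal T}(p,{\bf E}_2)$ for $p=3,6,12$ is not caused by coinciding parameters, reducibility, or a signature failure of candidate hypergeometric data (for $p=6$ the natural candidate parameters are perfectly nondegenerate), and the theorem does not assert that these lattices fail to be commensurable to monodromy groups. The obstruction is group-theoretic: when $3\mid p$ the relevant group $\langle R_1,R_2,R_3,S\rangle$ has abelianisation $({\mathbb Z}/p)^2\times{\mathbb Z}/3$ (for $p=3,6$), so it needs three generators and cannot itself be a hypergeometric group, and the method simply does not apply; for $p=12$ the question is left open. So the case analysis you describe as ``an explicit arithmetic inequality on the candidate rationals'' would neither produce these exceptions nor justify them.
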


An immediate consequence is:

\begin{corollary}\label{cor-main}
Each currently known commensurability
class of non-arithmetic lattices in ${\rm PU}(2,1)$ contains a 
monodromy group for a third order hypergeometric equation.
\end{corollary}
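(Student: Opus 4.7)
The plan is to reduce the corollary to a case analysis over the known commensurability classes of non-arithmetic lattices in $\mathrm{PU}(2,1)$. At present, every such lattice is commensurable either to a lattice on the Deligne-Mostow list or to one of the lattices constructed by Deraux, Parker and Paupert in \cite{DPP-1}, \cite{DPP}, so I would prove the corollary by showing that each of these two families contains a monodromy group of a third order hypergeometric equation in every commensurability class.

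For the DPP family, Theorem \ref{thm-main} already does everything except the three sporadic cases $\mathcal{T}(p,\mathbf{E}_2)$ with $p=3,6,12$. These three would be dispatched by invoking the explicit commensurabilities, identified in the classification work in \cite{DPP} and further clarified in \cite{Deraux-abelian} and \cite{Deraux-CHL}, between each of $\mathcal{T}(p,\mathbf{E}_2)$ and a suitable Deligne-Mostow lattice. This transports the three outliers into the Deligne-Mostow class, where they can be handled together with the rest of that list.

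For the Deligne-Mostow non-arithmetic lattices in $\mathrm{PU}(2,1)$, my plan is to traverse the (finite and short) list of ball-quintuple exponents that give non-arithmetic lattices. For each commensurability class I would choose a convenient representative and exhibit parameters $(\alpha_j,\beta_j)$ for a third order hypergeometric equation whose Levelt monodromy group is commensurable to it. The signature check needed to place the monodromy inside $\mathrm{PU}(2,1)$ would use the Beukers-Heckman sign rule recalled in Section~\ref{sec-higher-monodromy}; the commensurability itself would be verified by matching a generating set of complex reflections, or equivalently by realising both groups as finite-index subgroups of a common complex hyperbolic reflection group.

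The hard part is the last step: producing commensurable parameters for each Deligne-Mostow non-arithmetic class explicitly, since those lattices were originally constructed via hypergeometric functions in two variables rather than one-variable equations of third order. Because the DM non-arithmetic list is short, and because the connections between its members and higher hypergeometric monodromies are already partly tabulated in \cite{BH} and analysed in \cite{SV}, I expect the identification to be feasible case by case; the only real bookkeeping concern is ensuring commensurability of lattices rather than a weaker abstract coincidence of groups.
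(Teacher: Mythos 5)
There is a genuine gap, and it comes from two misreadings of how the corollary sits relative to Theorem~\ref{thm-main}. First, the three excluded groups ${\mathcal T}(p,{\bf E}_2)$ for $p=3,\,6,\,12$ need no treatment at all: in the $(3,4,4;4)$ family only $p=4$ gives a non-arithmetic lattice (it is the only bold entry in that table), and ${\mathcal T}(4,{\bf E}_2)$ \emph{is} covered by Theorem~\ref{thm-main}. Since the corollary concerns only non-arithmetic commensurability classes, the arithmetic exceptions are simply irrelevant. Your plan instead proposes to ``dispatch'' them by invoking explicit commensurabilities between each ${\mathcal T}(p,{\bf E}_2)$, $p=3,6,12$, and a Deligne--Mostow lattice; no such commensurabilities are established in the paper or in the references you cite (the paper explicitly leaves open even the weaker question of whether these groups are commensurable to any two-generator group), so this step is unsupported as well as unnecessary.

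Second, and more seriously, the part you flag as ``the hard part''---producing third order hypergeometric parameters for each non-arithmetic Deligne--Mostow class in ${\rm PU}(2,1)$---is not additional work to be done after the theorem: it is already contained in Theorem~\ref{thm-main}. The lattice families treated in \cite{DPP}, and hence in the theorem, include the Deligne--Mostow lattices themselves; for instance the $(3,3,3;m)$ groups are exactly the groups $\Gamma(p,t)$ with $t=1/p+2/m-1/2$, and Proposition~\ref{prop-levelt-3fold} together with its parameter table exhibits the corresponding $\alpha_j,\beta_j$ (similarly for the other families carrying a $\Gamma(p,t)$ identification). So every currently known non-arithmetic commensurability class---Deligne--Mostow or new---contains a lattice from the \cite{DPP} tables other than the arithmetic exceptions, and the corollary is immediate from the theorem. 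As written, your argument both inserts an unjustified commensurability claim and defers the essential content to a case-by-case verification that the theorem has already carried out, so it does not constitute a proof of the corollary.
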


We remark that a consequence is that the Deligne-Mostow 
lattices in ${\rm PU}(2,1)$ are commensurable both to monodromy groups 
of second order hypergeometric equations in two variables and to monodromy 
groups of third order hypergeometric equations in one variable. It is 
not clear whether, for each group, there is any relationship between these 
equations. 

It would be interesting to know whether there are any additional higher 
hypergeometric equations with non-arithmetic monodromy, and perhaps
this could give a place to start looking for more non-arithmetic lattices.

In Section~\ref{sec-higher-monodromy} we review the necessary background
on higher order hypergeometric equations and their solutions, following 
Beukers and Heckman \cite{BH}. 
In Section~\ref{sec-triangle-groups} we review groups generated by three
complex reflections. Our main reference is Deraux, Parker and Paupert 
\cite{DPP}. In Section~\ref{sec-main-proof} we combine the previous two
sections in order to prove Theorem \ref{thm-main}. The proof is split 
into different cases according to the families considered in \cite{DPP}. 
In each case we exhibit a set of generators for the monodromy group 
satisfying Levelt's criterion and we give the possible
values of the angle parameters $\alpha_j$, $\beta_j$ of the associated
higher hypergeometric equation. These results are 
Propositions~\ref{prop-levelt-3fold}, 
\ref{prop-levelt-2fold}, \ref{prop-levelt-334}, \ref{prop-levelt-23n}
and \ref{prop-levelt-344} respectively. 

\medskip

\noindent
{\bf Acknowledgements.} 
I am very grateful to the following people for their help with this project.
This problem was suggested to me by T.N.~Venkataramana, who also told me
of the paper \cite{BH} by Beukers and Heckman. I also have benefited from 
many useful conversations with Martin Deraux and Julien Paupert.

\section{Hypergeometric functions}\label{sec-higher-monodromy}

We review hypergeometric equations and functions together
with the associated monodromy groups. We do not discuss the case of
hypergeometric functions in several variables. This material is discussed
at length by Deligne and Mostow \cite{DM}, \cite{Mostow-IHES}.

\subsection{The classical case}

We begin with a brief review the classical hypergeometric equation
and hypergeometric functions; see Chapter XIV of
Whittaker and Watson \cite{WW}.

We write the Pochhammer symbol
\begin{equation}\label{eq-poch}
(\alpha)_k=\alpha(\alpha+1)\cdots(\alpha+k-1)
=\frac{\Gamma(\alpha+k)}{\Gamma(\alpha)}. 
\end{equation}
The classical hypergeometric function $_2F_1$ is defined by the series
$$
_2F_1(\alpha,\beta,\gamma;z)
=\sum_{k=0}^\infty \frac{(\alpha)_k(\beta)_k}{(\gamma)_k}\cdot\frac{z^k}{k!}.
$$
It is a solution to the hypergeometric equation
$$
z(1-z)\frac{d^2w}{dz^2}
+\bigl(\gamma-(\alpha+\beta+1)z\bigr)\frac{dw}{dz}-\alpha\beta w=0.
$$
It has three singular points at $0$, $1$ and $\infty$. 
Analytically continuing a pair of independent solutions to this
equation along a closed path around these singular points yields
two new solutions which are linear combinations of the two initial solutions.
The resulting $2\times 2$ matrix is the monodromy of the equation
associated to this path. The monodromy group was investigated by 
Schwarz \cite{HAS}. In particular, if
$1-\gamma$, $\alpha-\beta$ and $\gamma-\alpha-\beta$
are rational numbers with denominators $p$, $q$, $r$, all at least 2, then the 
monodromy group is the (orientation preserving) $(p,q,r)$ triangle group,
which is discrete. This group preserves a Hermitian form which is positive
definite when $1/p+1/q+1/r>1$, degenerate when $1/p+1/q+1/r=0$
and indefinite when $1/p+1/q+1/r<1$.

\subsection{Higher hypergeometric equations and functions}

This section is a review of higher order hypergeometric equations and
functions and it closely follows the paper \cite{BH} by Beukers and Heckman.
Our aim is to include the necessary background for later sections.
For a fuller account readers should look at \cite{BH}.

The higher hypergeometric function $_nF_{n-1}$ (see equation (1.3) of 
Beukers and Heckman \cite{BH}) is 
defined to be:
$$
_nF_{n-1}(\alpha_1,\ldots,\alpha_n;\beta_1,\ldots,\beta_{n-1};z)
=\sum_{k=0}^\infty 
\frac{(\alpha_1)_k\cdots(\alpha_n)_k}{(\beta_1)_k\cdots(\beta_{n-1})_k}
\cdot \frac{z^k}{k!}
$$
where once again $(\alpha)_k$ denotes the Pochhammer symbol \eqref{eq-poch}.
We use the differential operator $\theta=z\frac{d}{dz}$ and define
\begin{eqnarray*}
D(\alpha;\beta) & = & D(\alpha_1,\ldots,\alpha_n;\beta_1,\ldots,\beta_n) \\
& = & (\theta+\beta_1-1)\cdots(\theta+\beta_n-1)
-z(\theta+\alpha_1)\cdots(\theta+\alpha_n).
\end{eqnarray*}
Following equation (2.5) of \cite{BH}, we 
write the higher hypergeometric equation as
\begin{equation}\label{eq-HG-eq}
D(\alpha;\beta)w=D(\alpha_1,\ldots,\alpha_n;\beta_1,\ldots,\beta_n)w=0.
\end{equation}
If no pair of $\beta_1,\ldots,\,\beta_n$ differ by an integer, then
$n$ independent solutions of the equation \eqref{eq-HG-eq} are given,  
for $i=1,\,\ldots,\,n$, by
$$
z^{1-\beta_i}\,_nF_{n-1}(1+\alpha_1-\beta_i,\ldots,1+\alpha_n-\beta_i;
1+\beta_1-\beta_i,{\buildrel{\vee}\over{\cdots}},1+\beta-\beta_n-\beta_i;z)
$$
where $\vee$ denotes the variable $1+\beta_i-\beta_i$
has been omitted; equation (2.9) of \cite{BH}.

\medskip

Beukers and Heckman give the following definition of a hypergeometric group.

\begin{definition}(Definition 3.1 of Beukers and Heckman \cite{BH})
Suppose that $a_1,\,\ldots,\,a_n$; $b_1,\,\ldots,\,b_n\in {\mathbb C}-\{0\}$
with $a_j\neq b_k$ for all $j,\,k\in\{1,\,\ldots,\,n\}$. 
A {\bf hypergeometric group} 
$H(a;b)=H(a_1,\ldots,a_n;b_1,\ldots,b_n)$ 
with numerator parameters $a_1,\,\ldots,\,a_n$
and denominator parameters $b_1,\,\ldots,\,b_n$ is a subgroup of 
${\rm GL}(n,{\mathbb C})$ generated by $A$ and $B$ which have
characteristic polynomials
$$
\chi_A(t)={\rm det}(tI-A)=\prod_{j=1}^n(t-a_j),\quad
\chi_B(t)={\rm det}(tI-B)=\prod_{j=1}^n(t-b_j)
$$
and so that $BA^{-1}$ is a complex reflection, that is $(BA^{-1}-I)$ has rank 
one.

A {\bf scalar shift} of the hypergeometric group $H(a;b)$ is a hypergeometric
group $H(da;db)=H(da_1,\ldots,da_n,db_1,\ldots,db_n)$ for some
$d\in{\mathbb C}-\{0\}$.
\end{definition}

Hypergeometric groups are monodromy groups of higher hypergeometric equations:

\begin{proposition}[Proposition 3.2 of Beukers and Heckman \cite{BH}]
Suppose $a_1,\,\ldots,\,a_n$; $b_1,\,\ldots,\,b_n\in {\mathbb C}-\{0\}$
with $a_j\neq b_k$ for all $j,\,k\in\{1,\,\ldots,\,n\}$. 
Let $\alpha_1,\,\ldots,\,\alpha_n,\beta_1,\,\ldots,\,\beta_n$ be 
complex numbers satisfying $a_j=e^{2\pi i\alpha_j}$ and $b_j=e^{2\pi i\beta_j}$ for
$j=1,\,\ldots,\,n$. The monodromy group of the hypergeometric equation
$D(\alpha_1,\ldots,\alpha_n;\beta_1,\ldots,\beta_n)w=0$ is a hypergeometric 
group with parameters 
$a_1,\,\ldots,\,a_n,\,b_1,\,\ldots,\,b_n$.
\end{proposition}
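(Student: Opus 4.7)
The plan is to analyze the local monodromy at each of the three singular points $z=0$, $z=1$, $z=\infty$ of the differential equation $D(\alpha;\beta)w=0$, show that the monodromies at $0$ and $\infty$ have the required characteristic polynomials, and that the monodromy at $1$ is a complex reflection. The loop relation in the fundamental group of $\mathbb{P}^1\setminus\{0,1,\infty\}$ then packages these three pieces into the hypergeometric group structure from the definition.

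First I would read off the local exponents at $z=0$ from the factored form of $D(\alpha;\beta)$: the leading part near $z=0$ is $(\theta+\beta_1-1)\cdots(\theta+\beta_n-1)$, so the indicial polynomial has roots $1-\beta_j$, and under the standing assumption that no two $\beta_j$ differ by an integer the solutions listed in the excerpt behave like $z^{1-\beta_j}$ times a holomorphic factor. Hence the monodromy $B$ around $0$ is diagonalizable with eigenvalues $e^{2\pi i(1-\beta_j)}$, which agree with $b_j = e^{2\pi i\beta_j}$ after the scalar shift permitted by the definition. A parallel analysis at $z=\infty$ via $u=1/z$, using the dominant term $-z(\theta+\alpha_1)\cdots(\theta+\alpha_n)$, shows the monodromy $A$ around $\infty$ has eigenvalues $a_j=e^{2\pi i\alpha_j}$.

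The crux is the analysis at $z=1$. Here I would use the Fuchs relation for a regular Fuchsian equation of order $n$ with three singular points, which forces the sum of local exponents at $0,1,\infty$ to equal $\binom{n}{2}$. Together with a direct computation of the indicial equation of $D(\alpha;\beta)$ at $z=1$, this forces $n-1$ of the local exponents at $z=1$ to be the integers $0,1,\ldots,n-2$, the remaining one being $\sum_j\beta_j-\sum_j\alpha_j-1$. Since $n-1$ independent local solutions are single-valued near $z=1$, the monodromy $M_1$ fixes an $(n-1)$-dimensional subspace pointwise, so $M_1-I$ has rank one.

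Finally, I would invoke the fundamental group relation $\gamma_0\gamma_1\gamma_\infty=1$ with a consistent choice of base point and orientations, so that the corresponding monodromies satisfy $B M_1 A^{-1}=I$ (up to conjugation and orientation conventions). This yields $M_1 = B^{-1}A$, hence $BA^{-1}$, or its inverse, is the complex reflection required in the definition. Combined with the characteristic polynomials of $A$ and $B$ found above, this exhibits the monodromy group as a hypergeometric group $H(a;b)$. The main obstacle I anticipate is the bookkeeping of orientations, the ordering of the generators in the loop relation, and the scalar shift, so that the eigenvalue assignments and the rank-one condition match the definition on the nose rather than only up to an overall inversion or scalar; this is a standard but delicate check rather than a conceptual difficulty.
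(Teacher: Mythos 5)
The paper itself offers no proof of this statement---it is imported verbatim from Beukers and Heckman \cite{BH}---so your proposal has to be measured against their argument, which does follow the route you describe (local exponents at $0$ and $\infty$, a reflection at $1$, the relation in $\pi_1(\mathbb{P}^1\setminus\{0,1,\infty\})$). The skeleton is right, but the crux is asserted rather than proved. From the indicial equation at $z=1$ you correctly obtain the exponents $0,1,\ldots,n-2$ and $\sum_j\beta_j-\sum_j\alpha_j-1$, but you then write ``since $n-1$ independent local solutions are single-valued near $z=1$'' as though this followed from the exponent list. It does not: those $n-1$ exponents differ from one another by integers, so a priori the local solutions attached to the smaller exponents may involve $\log(z-1)$, in which case $M_1-I$ would have rank larger than one. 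Ruling out logarithms---equivalently, exhibiting $n-1$ linearly independent solutions holomorphic at $z=1$---is exactly the nontrivial content of the proposition (the classical fact, going back to Pochhammer, that Beukers and Heckman invoke; it rests on the specific shape of $D(\alpha;\beta)$, e.g.\ on the coefficient of $d^nw/dz^n$ being $z^n(1-z)$, with only a simple zero at $z=1$), and some such argument must be supplied: the exponent count alone is not enough.

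Two smaller bookkeeping corrections. The monodromy around $0$ has eigenvalues $e^{2\pi i(1-\beta_j)}=b_j^{-1}$, and no scalar shift converts the multiset $\{b_j^{-1}\}$ into $\{b_j\}$, since a shift multiplies every eigenvalue by one and the same $d$; the correct device is to take $B$ to be the monodromy along the \emph{inverse} loop about $0$, so that with $A=M_\infty$, $B=M_0^{-1}$ and $M_0M_1M_\infty=1$ one gets $BA^{-1}=M_0^{-1}M_\infty^{-1}$, which is conjugate to $M_1$ (and the inverse of a reflection is again a reflection, so either convention works). Also, the proposition does not assume that no two $\beta_j$ differ by an integer, so you cannot lean on the explicit solution basis or on diagonalisability at $0$; this is harmless, because at a regular singular point the characteristic polynomial of the local monodromy is $\prod_j\bigl(t-e^{2\pi i\rho_j}\bigr)$ over the exponents $\rho_j$ counted with multiplicity, Jordan blocks or not, which is all that the definition of a hypergeometric group requires.
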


Hypergeometric groups were characterised by Levelt:

\begin{theorem}[Theorem 1.1 of Levelt \cite{Le};
Theorem 3.5 of Beukers and Heckman \cite{BH}].\label{thm-levelt}
Suppose $a_1,\,\ldots,\,a_n$; $b_1,\,\ldots,\,b_n\in {\mathbb C}-\{0\}$
with $a_j\neq b_k$ for all $j,\,k\in\{1,\,\ldots,\,n\}$. 
Let $A_1,\,\ldots,\,A_n,\,B_1,\,\ldots,\,B_n$ be defined by
$$
\prod_{j=1}^n(t-a_j)=t^n+A_1t^{n-1}+\cdots+A_n,\quad
\prod_{j=1}^n(t-b_j)=t^n+B_1t^{n-1}+\cdots+B_n.
$$
Let $A$ and $B$ in ${\rm GL}(n,{\mathbb C})$ be defined by:
$$
A=\left(\begin{matrix}
0 & 0 & \cdots & 0 & -A_n \\ 
1 & 0 & \cdots & 0 & -A_{n-1} \\ 
0 & 1 & \cdots & 0 & -A_{n-2} \\
\vdots & \vdots & & \vdots & \vdots \\
0 & 0 & \cdots & 1 & -A_1 
\end{matrix}\right),\quad
B=\left(\begin{matrix}
0 & 0 & \cdots & 0 & -B_n \\ 
1 & 0 & \cdots & 0 & -B_{n-1} \\ 
0 & 1 & \cdots & 0 & -B_{n-2} \\
\vdots & \vdots & & \vdots & \vdots \\
0 & 0 & \cdots & 1 & -B_1 
\end{matrix}\right).
$$
Then the matrices $A$ and $B$ generate a 
hypergeometric group $H(a_1,\ldots,a_n,b_1,\ldots,b_n)$.
%with parameters $a_1,\,\ldots,\,a_n,\,b_1,\,\ldots,\,b_n$.
Moreover, any hypergeometric group with the same parameters is conjugate
inside ${\rm GL}(n,{\mathbb C})$ to this one. Also any scalar shift
$H(da_1,\ldots,da_n,db_1,\ldots,db_n)$ for $d\in{\mathbb C}-\{0\}$
of this group can be obtained by multiplying $A$ and $B$ by $d$ with
$|d|=1$.
\end{theorem}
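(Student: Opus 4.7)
The theorem splits into three parts, which I would attack in turn: (i) show that the pair $(A, B)$ defined in the statement generates a hypergeometric group with the prescribed parameters, (ii) show uniqueness of such a pair up to conjugation in $\mathrm{GL}(n,\mathbb{C})$, and (iii) handle the scalar shift.

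For (i), the plan is direct computation. The classical fact that a companion matrix has the polynomial whose coefficients appear in its last column as its characteristic polynomial gives the spectra of $A$ and $B$; invertibility follows from $A_n = (-1)^n\prod a_j\neq 0$. The difference $B - A$ vanishes outside its last column, so has rank at most one; and the hypothesis $a_j\neq b_k$ forces $\chi_A\neq\chi_B$, so some $A_i\neq B_i$ and the rank is exactly one. Multiplying on the right by $A^{-1}$ preserves the rank, so $BA^{-1}-I$ is a complex reflection as required.

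For (ii), given any other pair $(A', B')$ satisfying the definition, the plan is to produce $v\in\mathbb{C}^n$ such that $e_i := (A')^{i-1} v$, for $i=1,\ldots,n$, is a basis in which $A'$ and $B'$ are precisely the matrices displayed in the theorem. Set $W := \ker(B' - A')$, a hyperplane since $\mathrm{rank}(B' - A') = 1$. The equalities $B'e_i = e_{i+1} = A'e_i$ for $i<n$ are equivalent to $(A')^{i-1}v \in W$ for $i=1,\ldots,n-1$, so we seek $v$ in the intersection $V := \bigcap_{i=0}^{n-2} (A')^{-i}W$, which as an intersection of $n-1$ hyperplanes in $\mathbb{C}^n$ has dimension at least one. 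The main obstacle is then to verify that any nonzero $v\in V$ is automatically cyclic for $A'$. I would argue by contradiction: if the $A'$-cyclic span $U$ of $v$ had dimension $k < n$, the defining property of $v$ would force $v, A'v, \ldots, (A')^{k-1}v \in W$, hence $U\subseteq W$; but then $A'|_U = B'|_U$, and its characteristic polynomial would divide both $\prod(t - a_j)$ and $\prod(t - b_j)$, contradicting $a_j\neq b_k$. Once cyclicity is established, the Cayley--Hamilton theorem forces the matrix of $A'$ in the basis $\{e_i\}$ to be exactly the stated companion form, and the same reasoning applies to $B'$ since $e_1$ is a cyclic vector for $B'$ as well (via $(B')^{i-1}e_1 = e_i$).

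For (iii), the pair $(dA, dB)$ has characteristic polynomials $\prod(t - da_j)$ and $\prod(t - db_j)$, and satisfies $(dB)(dA)^{-1} - I = BA^{-1} - I$ of rank one. So $(dA, dB)$ generates a hypergeometric group with parameters $(da; db)$, which by (ii) is conjugate to the Levelt representative for $H(da; db)$. The restriction $|d|=1$ is the one needed to preserve the Hermitian form invariant under $H(a; b)$ when such a form exists.
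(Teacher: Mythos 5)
Your plan is correct, but note that the paper itself gives no proof of this statement: it is quoted verbatim from Levelt and from Theorem 3.5 of Beukers--Heckman, and your argument (companion matrices for existence; choosing $v$ in $\bigcap_{i=0}^{n-2}(A')^{-i}\ker(B'-A')$, proving it is cyclic via the invariant-subspace/disjoint-spectra contradiction, and reading off the companion forms by Cayley--Hamilton) is essentially the classical proof found in those sources. The only loose end is the scalar-shift clause, which is stated vaguely in the paper; your reading of it — that $(dA,dB)$ realises $H(da;db)$ by uniqueness, with $|d|=1$ being the condition relevant to preserving a Hermitian form — is a reasonable and adequate interpretation.
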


In fact, it will be more convenient for us to use a normalisation
where the $-A_j$ (respectively $-B_j$) occur in the first row of $A$ 
(respectively $B$) rather than the last column. To be
precise, we require the following normal forms:
\begin{equation}\label{eq-AB-general}
A=\left(\begin{matrix}
-A_1 & -A_2 & \cdots & -A_{n-1} & -A_n \\
1 & 0 & \cdots & 0 & 0 \\
0 & 1 & \cdots & 0 & 0 \\
\vdots & \vdots & & \vdots & \vdots \\
0 & 0 & \cdots & 1 & 0 
\end{matrix}\right), \quad
B=\left(\begin{matrix}
-B_1 & -B_2 & \cdots & -B_{n-1} & -B_n \\
1 & 0 & \cdots & 0 & 0 \\
0 & 1 & \cdots & 0 & 0 \\
\vdots & \vdots & & \vdots & \vdots \\
0 & 0 & \cdots & 1 & 0 
\end{matrix}\right)
\end{equation}
It is clear that we can
go from one form to the other by taking the transpose then performing 
the same permutation of rows and columns in both matrices.

\subsection{Monodromy when $n=3$}

For the rest of the paper we restrict our attention to the case
of $3\times 3$ matrices.
We begin with matrices $A$ and $B$ in the previous section with $n=3$.
\begin{eqnarray}
A & = & \left(\begin{matrix}
-A_1 & -A_2 & -A_3 \\ 1 & 0 & 0 \\ 0 & 1 & 0 \end{matrix}\right)
\ =\ \left(\begin{matrix} 
a_1+a_2+a_3 & -a_1a_2-a_2a_3-a_3a_1 & a_1a_2a_3 \\
1 & 0 & 0 \\ 0 & 1 & 0 \end{matrix}\right),\label{eq-A-general}\\
B & = & \left(\begin{matrix} 
-B_1 & -B_2 & -B_3 \\ 1 & 0 & 0 \\ 0 & 1 & 0 \end{matrix}\right)
\ =\ \left(\begin{matrix} 
b_1+b_2+b_3 & -b_1b_2-b_2b_3-b_3b_1 & b_1b_2b_3 \\
1 & 0 & 0 \\ 0 & 1 & 0 \end{matrix}\right).\label{eq-B-general}
\end{eqnarray}
Also,
$$
A^{-1}=\left(\begin{matrix} 0 & 1 & 0 \\ 0 & 0 & 1 \\
-1/A_3 & -A_1/A_3 & -A_2/A_3 \end{matrix}\right),\quad 
B^{-1}=\left(\begin{matrix} 0 & 1 & 0 \\ 0 & 0 & 1 \\
-1/B_3 & -B_1/B_3 & -B_2/B_3 \end{matrix}\right).
$$ 
Thus
$$
BA^{-1} = \left(\begin{matrix}
B_3/A_3 & (B_3A_1-B_1A_3)/A_3 & (B_3A_2-B_2A_3)/A_3 \\
0 & 1 & 0 \\ 0 & 0 & 1 \end{matrix}\right).
$$ 
It is clear that $(BA^{-1}-I)$ has rank one, and so it is a complex
reflection.

For $j=1,\,2,\,3$ it is easy to see that the vector ${\bf a}_j$
below spans the $a_j$-eigenspace of $A$. We also write down the image
of ${\bf a}_j$ under $B$.
\begin{equation}\label{eq-aj-Baj}
{\bf a}_j=\left(\begin{matrix} a_j^2 \\ a_j \\ 1 \end{matrix}\right),\quad  
B{\bf a}_j=\left(\begin{matrix}
-B_1a_j^2-B_2a_j-B_3 \\ a_j^2 \\ a_j\end{matrix}\right)
=\left(\begin{matrix} a_j^3-\chi_B(a_j) \\
a_j^2 \\ a_j\end{matrix}\right).
\end{equation}
Let $U$ be the following matrix whose columns are eigenvectors for $A$:
$$
U=\left(\begin{matrix}
a_1^2/(a_1-a_2)(a_1-a_3) & a_2^2/(a_2-a_1)(a_2-a_3) & a_3^2/(a_3-a_1)(a_3-a_2) 
\\
a_1/(a_1-a_2)(a_1-a_3) & a_2/(a_2-a_1)(a_2-a_3) & a_3/(a_3-a_1)(a_3-a_2) \\
1/(a_1-a_2)(a_1-a_3) & 1/(a_2-a_1)(a_2-a_3) & 1/(a_3-a_1)(a_3-a_2)
\end{matrix}\right).
$$
Then
$$
U^{-1}=\left(\begin{matrix}
1 & -a_2-a_3 & a_2a_3 \\
1 & -a_3-a_1 & a_1a_3 \\
1 & -a_1-a_2 & a_1a_2 \end{matrix}\right).
$$
Using equation \eqref{eq-aj-Baj} we have:
\begin{equation}\label{eq-AB-3}
U^{-1}AU=\left(\begin{matrix} 
a_1 & 0 & 0 \\ 0 & a_2 & 0 \\ 0 & 0 & a_3 \end{matrix}\right), \quad
U^{-1}BU=\left(\begin{matrix} 
a_1(1+c_1) & a_2c_2 & a_3c_3 \\
a_1c_1 & a_2(1+c_2) & a_3c_3 \\
a_1c_1 & a_2c_2 & a_3(1+c_3)
\end{matrix}\right)
\end{equation}
where for $j,\,k\in\{1,\,2,\,3\}$:
\begin{equation}\label{eq-c}
c_j=\frac{-\chi_B(a_j)}{a_j\prod_{k\neq j}(a_k-a_j)}
=\frac{(b_j-a_j)}{a_j}\prod_{k\neq j}\frac{(b_k-a_j)}{(a_k-a_j)}
%=\frac{-(a_j^3+B_1a_j^2+B_2a_j+B_3)}{a_j\prod_{k\neq j}(a_k-a_j)}
.
\end{equation}
Note that ${\rm tr}(BA^{-1})=2+(b_1b_2b_3)/(a_1a_2a_3)$ and 
${\rm tr}(B'{A'}^{-1})=3+c_1+c_2+c_3$. Since these matrices are conjugate
we have the identity:
\begin{equation}\label{eq-c-eq}
b_1b_2b_3=a_1a_2a_3(c_1+c_2+c_3+1).
\end{equation}

Beukers and Heckman show that when the eigenvalues of $A$ and $B$ satisfy
$|a_j|=|b_j|=1$, for $j=1,\,2,\,3$, then $A$ and $B$ preserve a Hermitian form
and they give give a recipe for finding the signature of of this form. We 
give a direct proof of their result in the $3\times 3$ case.

\begin{theorem}[Beukers and Heckman, Theorem~4.5 of \cite{BH}]\label{thm-bh}
Suppose that the eigenvalues $a_1$, $a_2$, $a_3$; $b_1$, $b_2$, $b_3$ 
of $A$ and $B$ are complex numbers with $|a_j|=|b_j|=1$ for $j=1,\,2,\,3$. 
Suppose also that the $a_j$ are distinct.
Let $A'=U^{-1}AU$ and $B'=U^{-1}BU$ be given by \eqref{eq-AB-3}. 
For $j,\,k\in\{1,2,3\}$ write
\begin{eqnarray*}
d_j & = & -i\left(\frac{b_1b_2b_3}{a_1a_2a_3}\right)^{-1/2}
\frac{(b_j-a_j)}{a_j}\prod_{k\neq j}\frac{(b_k-a_j)}{(a_k-a_j)}
=-i\bigl({\rm det}(BA^{-1})\bigr)^{-1/2}c_j.
\end{eqnarray*}
Then $A'$ and $B'$ preserve a diagonal Hermitian form 
$D={\rm diag}(d_1,d_2,d_3)$. Furthermore, any Hermitian form preserved
by $A'$ and $B'$ is a real multiple of $D$.
\end{theorem}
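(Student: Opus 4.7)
My plan is to handle the two assertions of the theorem separately. For the existence of the invariant form, I would first exploit that $A' = \mathrm{diag}(a_1,a_2,a_3)$ has distinct eigenvalues of modulus~$1$. Written entry-wise, the condition $A'^{*}HA' = H$ becomes $(\bar a_i a_j - 1)H_{ij} = 0$, which forces every $A'$-invariant Hermitian form to be diagonal. It then suffices to find real numbers $d_1, d_2, d_3$ so that $D = \mathrm{diag}(d_1,d_2,d_3)$ is also preserved by $B'$.

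The calculation that makes this tractable is the decomposition $B' = A' + e w^{T}$ read off directly from \eqref{eq-AB-3}, where $e = (1,1,1)^{T}$ and $w = (a_1 c_1, a_2 c_2, a_3 c_3)^{T}$. Using $A'^{*}DA' = D$, the $(i,j)$ entry of $B'^{*}DB' - D$ equals $\bar a_i a_j$ times
$$d_i c_j + \bar c_i d_j + \bar c_i c_j (d_1 + d_2 + d_3).$$
The key algebraic input is the conjugation identity $\bar c_j = -c_j/\Delta$, where $\Delta = b_1b_2b_3/(a_1a_2a_3) = \det(BA^{-1})$; this follows by conjugating the product formula \eqref{eq-c} factor-by-factor using $\bar a_k = 1/a_k$ and $\bar b_k = 1/b_k$, and as a bonus it yields $\bar d_j = d_j$, so $D$ is genuinely Hermitian. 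After substituting $d_j = -i\Delta^{-1/2}c_j$ and $\bar c_j = -c_j/\Delta$, each entry pulls out a common factor $ic_i c_j \Delta^{-3/2}$, and what remains collapses to $\Delta - 1 - (c_1+c_2+c_3) = 0$, which is exactly the trace identity \eqref{eq-c-eq}.

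For uniqueness I would invoke irreducibility. The hypothesis $a_j \neq b_k$ ensures via \eqref{eq-c} that $c_j \neq 0$ for every $j$, so the off-diagonal entries in each column of $B'$ in \eqref{eq-AB-3} are nonzero; hence no proper coordinate subspace of $\mathbb{C}^{3}$ is $B'$-stable. Since the coordinate subspaces are exactly the $A'$-stable subspaces, $\langle A', B'\rangle$ acts irreducibly on $\mathbb{C}^{3}$. By Schur's lemma the invariant sesquilinear forms make up an at-most one-dimensional complex space, so the invariant Hermitian forms are at most one-dimensional over $\mathbb{R}$. The main obstacle I anticipate is spotting and proving the conjugation identity $\bar c_j = -c_j/\Delta$; once that is in hand, the remainder is routine expansion of $B'^{*}DB' - D$ plus the trace identity \eqref{eq-c-eq} and Schur's lemma.
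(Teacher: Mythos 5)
Your proof is correct, and the two halves compare differently with the paper. For existence you follow essentially the paper's path: the same entrywise expansion of ${B'}^*DB'-D$ (your rank-one decomposition $B'=A'+e\,w^{T}$ is just a tidy way of organising it), the same key conjugation identity --- the paper writes it as $c_j/\bar c_j=-b_1b_2b_3/(a_1a_2a_3)$, you as $\bar c_j=-c_j/\Delta$ --- and the same trace identity \eqref{eq-c-eq}; the paper merely routes the computation through $\psi$, setting $c_j=ie^{i\psi}d_j$ with $d_j$ real and $d_1+d_2+d_3=2\sin\psi$, where you substitute $d_j=-i\Delta^{-1/2}c_j$ directly, so the cancellation you exhibit is the same one. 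Where you genuinely diverge is uniqueness. The paper stays computational: after noting that $A'$-invariance forces diagonality, it reuses the entrywise equations for a putative $D'={\rm diag}(d'_1,d'_2,d'_3)$ and eliminates to get $\bar c_i(c_kd'_j-c_jd'_k)=0$, hence $(d'_j)$ proportional to $(c_j)$ and so $D'$ a real multiple of $D$. You instead observe that $a_j\neq b_k$ forces every $c_j\neq0$, so no coordinate subspace is $B'$-stable, the pair acts irreducibly, and Schur's lemma confines the invariant sesquilinear forms to a complex line; since $D$ is a nonzero Hermitian element of that line (indeed all $d_j\neq0$), the invariant Hermitian forms are exactly the real multiples of $D$. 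Your route is more conceptual and extends verbatim to the $n\times n$ case, at the modest cost of invoking $a_j\neq b_k$ explicitly --- a hypothesis not listed in the theorem's statement but standing in the hypergeometric setting, and one the paper's uniqueness step also needs implicitly when it divides by $\bar c_i$ and identifies $\lambda$ from a nonzero $c_j$.
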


\begin{proof}
Beukers and Heckman give a proof valid for all $n\times n$ matrices
of the given form. We give a short direct proof for $3\times 3$ matrices.
It is possible to extend this proof to the $n\times n$ case, but we will
not need that below.

Let $c_j$ be given by \eqref{eq-c}. 
Observe that, since $|a_i|=|b_i|=1$ for all $i$, we have
$$
\frac{c_j}{\bar{c}_j}
=\frac{(b_j-a_j)\bar{a}_j}{a_j(\bar{b}_j-\bar{a}_j)}
\prod_{k\neq j}\frac{(b_k-a_j)(\bar{a}_k-\bar{a}_j)}
{(a_k-a_j)(\bar{b}_k-\bar{a}_j)}
=-\frac{b_1b_2b_3}{a_1a_2a_3}.
$$
Define $\psi\in[0,\pi)$ by 
\begin{equation}\label{eq-psi}
e^{i\psi}=\left(\frac{b_1b_2b_3}{a_1a_2a_3}\right)^{1/2}
=\bigl({\rm det}(BA^{-1})\bigr)^{1/2}.
\end{equation}
Thus, for $j=1,\,2,\,3$, we have $c_j/\bar{c}_j=-e^{2i\psi}$ and so 
$c_j=ie^{i\psi}d_j$ for some real number $d_j$.
Also note that, using \eqref{eq-c-eq}, we have
\begin{equation}\label{eq-d}
d_1+d_2+d_3=-ie^{-i\psi}(c_1+c_2+c_3)
=-ie^{-i\psi}\left(\frac{b_1b_2b_3}{a_1a_2a_3}-1\right)
=2\sin(\psi).
\end{equation}

Let $D$ be the diagonal matrix $D={\rm diag}(d_1,d_2,d_3)$. We claim
that the matrices $A'=U^{-1}AU$ and $B'=U^{-1}BU$ given by \eqref{eq-AB-3}
satisfy ${A'}^*DA'=D$ and ${B'}^*DB'=D$. The first of these is obvious
since $A'=U^{-1}AU$ is diagonal with entries $a_j$ where $|a_j|=1$.
A short calculation shows that the $ij$th entry of ${B'}^*DB'-D$ is
$$
\bar{a}_ia_j\bigl(\bar{c}_ic_j(d_1+d_2+d_3)+\bar{c}_id_j+c_jd_i\bigr).
$$
Using the definition of $d_j$ and equation \eqref{eq-d} we have
$$
\bar{c}_ic_j(d_1+d_2+d_3)+\bar{c}_id_j+c_jd_i
=d_id_j2\sin(\psi)+ie^{i\psi}d_id_j-ie^{-i\psi}d_id_j=0.
$$
Hence ${B'}^*DB'=D$ as claimed.

Finally, we show that any Hermitian form preserved by $A'$ and $B'$
is a real multiple of $D$. First, observe that since $A'$ is
diagonal with distinct entries then any Hermitian form it preserves
must be diagonal, say $D'={\rm diag}(d_1',d_2',d_3')$. Now consider
${B'}^*D'B'-D'$. Arguing as
above, we see that for $i,\,j\in\{1,\,2,\,3\}$ we must have
$$
\bar{c}_ic_j(d'_1+d'_2+d'_3)+\bar{c}_id'_j+c_jd'_i=0.
$$
Therefore for all $i,\,j,\,k$ we have
\begin{eqnarray*}
0 & = & c_k\bigl(\bar{c}_ic_j(d'_1+d'_2+d'_3)+\bar{c}_id'_j+c_jd'_i\bigr)
-c_j\bigl(\bar{c}_ic_k(d'_1+d'_2+d'_3)+\bar{c}_id'_k+c_kd'_i\bigr) \\
& = & \bar{c}_i(c_kd'_j-c_jd'_k).
\end{eqnarray*}
Thus there is $\lambda\in{\mathbb C}$ with 
$\lambda d'_j=c_j$ for $j=1,\,2,\,3$.
Since $d'_j$ must be real, we see that $\lambda=ie^{i\psi}$, giving
the result. 
\end{proof}

\medskip

We can give the $d_j$ in terms of angle parameters as follows. The
formula below appears on page 335 of Beukers and Heckman \cite{BH}
where it is written $F(u_j,u_j)$.

\begin{corollary}\label{cor-bh}
Let $A$ and $B$ be as in Theorem \ref{thm-bh}. If $a_j=e^{2\pi i \alpha_j}$ and 
$b_j=e^{2\pi i\beta_j}$ for $j=1,\,2,\,3$. Then the Hermitian form
$D={\rm diag}(d_1,d_2,d_3)$ preserved by $A'$ and $B'$ has entries:
$$
d_j = 2\sin(\pi\beta_j-\pi\alpha_j)\prod_{j\neq k}
\frac{\sin(\pi\beta_k-\pi\alpha_j)}{\sin(\pi\alpha_k-\pi\alpha_j)}.
$$
\end{corollary}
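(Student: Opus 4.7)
The plan is to substitute the exponential expressions $a_j=e^{2\pi i\alpha_j}$, $b_j=e^{2\pi i\beta_j}$ directly into the closed-form expression for $d_j$ given in Theorem~\ref{thm-bh} and rewrite each factor using the basic identity
$$
e^{2\pi i\beta}-e^{2\pi i\alpha}
=2i\,e^{i\pi(\alpha+\beta)}\sin\bigl(\pi(\beta-\alpha)\bigr).
$$
I will apply this to each of the three differences $b_j-a_j$, $b_k-a_j$ (for $k\neq j$), and $a_k-a_j$ appearing in the product defining $d_j$.

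After the substitution, each sine factor of the desired form emerges immediately, and the task reduces to bookkeeping the exponential prefactors. Every $2i$ from a numerator factor pairs with a $2i$ from a denominator factor except for the contribution $(b_j-a_j)/a_j$, which leaves a single surviving $2i$ together with a phase $e^{i\pi(\beta_j-\alpha_j)}$. The remaining phases from the numerator collect into $\prod_{k=1}^{3}e^{i\pi(\beta_k-\alpha_k)}$, which equals $e^{i\pi\sum_k(\beta_k-\alpha_k)}$. By \eqref{eq-psi} this is precisely $(b_1b_2b_3/a_1a_2a_3)^{1/2}$ for a suitable choice of square root (the one specified by $\psi\in[0,\pi)$). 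Thus the prefactor
$$
-i\left(\frac{b_1b_2b_3}{a_1a_2a_3}\right)^{-1/2}
$$
from Theorem~\ref{thm-bh} cancels the collected phases, and combines with the leftover $2i$ to give the overall numerical constant $2$. What remains is exactly
$$
2\sin(\pi\beta_j-\pi\alpha_j)\prod_{k\neq j}
\frac{\sin(\pi\beta_k-\pi\alpha_j)}{\sin(\pi\alpha_k-\pi\alpha_j)},
$$
which is the claimed formula.

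The only subtle point is the compatibility of branches for the square root in $(b_1b_2b_3/a_1a_2a_3)^{-1/2}$ with the particular representative $\sum_k(\beta_k-\alpha_k)$ of the exponent. This is a matter of convention: the $\alpha_j,\beta_j$ are defined only modulo $\mathbb{Z}$, so the overall expression is determined up to sign, and the sign is pinned down by the normalisation $\psi\in[0,\pi)$ used in the proof of Theorem~\ref{thm-bh} (equivalently, by requiring that the resulting $d_j$ be real). Once this is noted, the derivation is a direct verification, and I expect no further obstacle.
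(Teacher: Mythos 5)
Your proposal is correct and is essentially the paper's own argument: the paper distributes the factor $\bigl(b_1b_2b_3/a_1a_2a_3\bigr)^{-1/2}$ into each bracket so that every factor becomes $2i\sin(\cdot)$ upon setting $a_j^{1/2}=e^{\pi i\alpha_j}$, $b_j^{1/2}=e^{\pi i\beta_j}$, which is the same computation you perform by expanding with $e^{2\pi i\beta}-e^{2\pi i\alpha}=2ie^{i\pi(\alpha+\beta)}\sin\bigl(\pi(\beta-\alpha)\bigr)$ and collecting phases. Your remark on the branch of the square root (sign fixed by $\psi\in[0,\pi)$, or equivalently up to an overall real multiple of the form) matches the convention implicit in the paper's proof.
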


\begin{proof}
In the expression for $d_i$ in Theorem \ref{thm-bh} we distribute the square 
root terms so that each bracket becomes purely imaginary. This yields 
$$
d_j= -i\bigl(b_j^{1/2}\bar{a}_j^{1/2}-\bar{b}_j^{1/2}a_j^{1/2}\bigr)
\prod_{k\neq j}
\frac{\bigl(b_k^{1/2}\bar{a}_j^{1/2}-\bar{b}_k^{1/2}a_j^{1/2}\bigr)}
{\bigl(a_k^{1/2}\bar{a}_j^{1/2}-\bar{a}_k^{1/2}a_j^{1/2}\bigr)}.
$$
Then substituting for $a_j^{1/2}=e^{\pi i\alpha_j}$ and $b_j^{1/2}=e^{\pi i\beta_j}$
gives the result.
\end{proof}

\medskip

We now briefly connect our proof of Theorem~\ref{thm-bh} with the one given 
by Beukers and Heckman on page 335 of \cite{BH}. They write $\zeta$ for
a solution of $-1=\zeta^2(b_1b_2b_3)/(a_1a_2a_3)$. 
In Proposition~4.4 of \cite{BH}, they define a vector 
${\bf u}$ satisfying  
$$
\zeta\bigl(B'{A'}^{-1}-I\bigr){\bf z}
=\pm\langle{\bf z},{\bf u}\rangle{\bf u}.
$$
where $A'=U^{-1}AU$ and $B'=U^{-1}BU$ as in \eqref{eq-AB-3}.  
We have
$$
-ie^{-i\psi}\bigl(B'{A'}^{-1}-I\bigr)=
-ie^{-i\psi}\left(\begin{matrix}
c_1 & c_2 & c_3 \\ c_1 & c_2 & c_3 \\ c_1 & c_2 & c_3
\end{matrix}\right)
=\left(\begin{matrix} 1 \\ 1 \\ 1 \end{matrix}\right)
(1,\ 1,\ 1)\left(\begin{matrix}
d_1 & 0 & 0 \\ 0 & d_2 & 0 \\ 0 & 0 & d_3 \end{matrix}\right).
$$
Therefore we may take $\zeta=\mp ie^{i\psi}$ and
$$
{\bf u}=\left(\begin{matrix} 1 \\ 1 \\ 1 \end{matrix}\right).
$$
Following \cite{BH}, we decompose this vector as 
${\bf u}={\bf u}_1+{\bf u}_2+{\bf u}_3$ where
${\bf u}_j$ is an $a_j$-eigenvector of $A'$. Since $A'$ is diagonal,
it is clear that ${\bf u}_j$ is the $j$th vector in the standard
basis for ${\mathbb C}^3$. Since the Hermitian form
is ${\rm diag}(d_1,d_2,d_3)$ this yields
$\langle{\bf u}_j,{\bf u}_j\rangle=d_j$, which agrees with
the statement given on page 335 of \cite{BH}.

\medskip

Beukers and Heckman give a list of angle parameters $\alpha_j$ and $\beta_j$ 
corresponding to finite primitive hypergeometric groups, that is groups where 
the Hermitian form is positive definite; Table 8.3 of \cite{BH}.  
For convenience we reproduce this table below.
All except one of these groups is a Shephard-Todd group \cite{ST} 
and Beukers and Heckman give the Shephard-Todd number (ST in the table below).
(See also \cite{Deraux-CHL} for further connections with Shephard-Todd groups.)
They also give the field generated by the coefficients of the characteristic
polynomials of $A$ and $B$. 
$$
\begin{array}{|r|lll|lll|l|l|}
\hline
{\rm BH} & \alpha_1 & \alpha_2 & \alpha_3 & \beta_1 & \beta_2 & \beta_3 %& p 
& \hbox{Field} & \hbox{ST} \\
\hline
2 & 3/14 & 5/14 & 13/14 & 0 & 1/3 & 2/3 %& 2 
& {\mathbb Q}(i\sqrt{7}) & 24 \\
3 & 3/14 & 5/14 & 13/14 & 0 & 1/4 & 3/4 %& 2 
& {\mathbb Q}(i\sqrt{7}) & 24 \\
4 & 3/14 & 5/14 & 13/14 & 1/7 & 2/7 & 4/7 %& 2 
& {\mathbb Q}(i\sqrt{7}) & 24 \\
\hline
5 & 0 & 1/5 & 4/5 & 1/6 & 1/2 & 5/6 %& -2 
& {\mathbb Q}(\sqrt{5}) & 23 \\
6 & 0 & 1/5 & 4/5 & 1/10 & 1/2 & 9/10 %& -2 
& {\mathbb Q}(\sqrt{5}) & 23 \\
\hline
7 & 1/6 & 11/30 & 29/30 & 0 & 1/5 & 4/5 %& 2 
& {\mathbb Q}(\sqrt{5},i\sqrt{3}) & 27 \\
8 & 1/6 & 11/30 & 29/30 & 0 & 1/4 & 3/4 %& 2 
& {\mathbb Q}(\sqrt{5},i\sqrt{3}) & 27 \\
\hline
9 & 1/6 & 2/3 & 5/6 & 0 & 1/4 & 3/4 %& 3/2  
& {\mathbb Q}(i\sqrt{3}) & 25 \\
10 & 1/9 & 4/9 & 7/9 & 0 & 1/6 & 1/2 %& 3/2 
& {\mathbb Q}(i\sqrt{3}) & 25 \\
11 & 1/9 & 4/9 & 7/9 & 0 & 1/4 & 3/4 %& 3 
& {\mathbb Q}(i\sqrt{3}) & 25 \\
\hline
12 & 1/12 & 7/12 & 5/6 & 0 & 1/4 & 3/4 %& 2 
& {\mathbb Q}(i\sqrt{3}) & -- \\
\hline
\end{array}
$$
For the group on the last line, the reflection group 
$\langle A^k(BA^{-1})A^{-k}\rangle$ is imprimitive
(see Section 5 of Beukers and Heckman), meaning that there is a direct 
sum decomposition of ${\mathbb R}^3$ that is permuted
by $\langle A^k(BA^{-1})A^{-k}\rangle$. Specifically, in this case,
$$
A=\left(\begin{matrix} -\omega & -\bar\omega & -1 \\ 
1 & 0 & 0 \\ 0 & 1 & 0 \end{matrix} \right),\quad
B=\left(\begin{matrix} 
1 & -1 & 1 \\ 1 & 0 & 0 \\ 0 & 1 & 0 \end{matrix} \right).
$$
The complex reflections $A^k(BA^{-1})A^{-k}$ preserve the decomposition
${\mathbb R}^3=V_1\oplus V_2\oplus V_3$ where
$$
V_1={\rm span}\left\{\left(\begin{matrix} 
1 \\ 1 \\ -\omega \end{matrix}\right)\right\},\quad
V_2={\rm span}\left\{\left(\begin{matrix} 
1 \\ \omega \\ -1 \end{matrix}\right)\right\},\quad
V_3={\rm span}\left\{\left(\begin{matrix} 
1 \\ 1 \\ -\bar\omega \end{matrix}\right)\right\}.
$$

In later sections we relate the groups in this table to groups generated by
complex reflections. For ease of reference, we refer to them as
BH2 to BH12.

\section{Groups generated by three complex reflections}
\label{sec-triangle-groups}

In this section we consider subgroups of ${\rm PGL}(3,{\mathbb C})$
generated by three complex reflections. These groups preserve a
Hermitian form. Depending on the signature of this form the group
acts on ${\bf P}^2_{\mathbb C}$, ${\bf E}^2_{\mathbb C}$ or ${\bf H}^2_{\mathbb C}$. 
We are interested when the group is a lattice, meaning that it is
discrete and the quotient of the above space by this group has finite
volume. Our main reference is Deraux, Parker and Paupert \cite{DPP}, 
which builds on several earlier papers including Mostow \cite{Mostow-RCP}, 
Parker and Paupert \cite{PP} and Thompson \cite{Thompson}.

\subsection{Parameters and angles}

Recall that an element $R$ of ${\rm PGL}(3,{\mathbb C})$ is a complex
reflection with angle $\psi$ if $(R-I)$ has rank one and $R$
has determinant $e^{i\psi}$. We consider subgroups of 
${\rm PGL}(3,{\mathbb R})$ generated by three complex reflections, 
each with angle $\psi=2\pi/p$. The space of conjugacy classes of such
groups has four dimensions; see Pratoussevitch \cite{AP} for
example. 

Following Mostow \cite{Mostow-RCP} we normalise the three complex reflections 
$R_1$, $R_2$ and $R_3$ so that the $e^{2\pi i/p}$-eigenspace of $R_j$ is 
spanned by the $j$th standard basis vectors. 
Note that, rather than normalising the determinants to be 1, 
we normalise that $(R_j-I)$ has rank one. 
(See also  Section~2.5 of Parker, Paupert \cite{PP} for a similar 
normalisation in the case of 3-fold symmetry.) Specifically:
\begin{eqnarray*}
R_1 & = & \left(\begin{matrix} 
e^{2\pi i/p} & \rho & -\bar\tau \\ 
0 & 1 & 0 \\ 0 & 0 & 1 \end{matrix}\right), \\
R_2 & = & \left(\begin{matrix}
1 & 0 & 0 \\ -e^{2\pi i/p}\bar\rho & e^{2\pi i/p} & \sigma \\
0 & 0 & 1 \end{matrix}\right), \\
R_3 & = & \left(\begin{matrix}
1 & 0 & 0 \\ 0 & 1 & 0 \\
e^{2\pi i/p}\tau & -e^{2\pi i/p}\bar\sigma & e^{2\pi i/p} \end{matrix}\right).
\end{eqnarray*}
These matrices preserve the Hermitian form (which is $1/2\sin(\pi/p)$ times
the form in \cite{DPP}):
\begin{equation}\label{eq-H}
H=\left(\begin{matrix} 
2\sin(\pi/p) & -ie^{-i\pi/p}\rho & ie^{-i\pi/p}\bar\tau \\
ie^{i\pi/p}\bar\rho & 2\sin(\pi/p) & -ie^{-i\pi/p}\sigma \\
-ie^{i\pi/p}\tau & ie^{i\pi/p}\bar\sigma & 2\sin(\pi/p)
\end{matrix}\right).
\end{equation}
We claimed above that the space of conjugacy classes of triples of complex
reflections all with angle $2\pi/p$ has dimension four. Hence, there
is some redundancy in the above parametrisation by three complex numbers.
Here is a precise statement which combines results found in 
Proposition 3.3 of \cite{DPP}, Section 10 of Pratoussevitch \cite{AP} 
or Section 2.3 of Thompson \cite{Thompson}.

\begin{proposition}\label{prop-parameters}
Let $R_j$, $R'_j$ for $j=1,\,2,\,3$ be complex reflections with angle
$2\pi/p$. Let $\rho,\,\sigma,\,\tau$ and $\rho',\,\sigma',\,\tau'$
be as defined above. The triples $R_1\,R_2,\,R_3$ and 
$R'_1$, $R'_2$, $R'_3$ are conjugate in ${\rm PGL}(3,{\mathbb C})$
if and only if one of the following is true:
\begin{enumerate}
\item[(1)] If $\rho\sigma\tau\neq 0$ and $p\ge 3$ then
$$
|\rho'|=|\rho|,\quad |\sigma'|=|\sigma|,\quad |\tau'|=|\tau|,\quad
\arg(\rho'\sigma'\tau')=\arg(\rho'\sigma'\tau).
$$
\item[(2)] If $\rho\sigma\tau\neq 0$ and $p=2$ then
$$
|\rho'|=|\rho|,\quad |\sigma'|=|\sigma|,\quad |\tau'|=|\tau|,\quad
\arg(\rho'\sigma'\tau')=\pm\arg(\rho'\sigma'\tau).
$$
\item[(3)] If $\rho\sigma\tau=0$ then
$$
|\rho'|=|\rho|,\quad |\sigma'|=|\sigma|,\quad |\tau'|=|\tau|.
$$
\end{enumerate}
\end{proposition}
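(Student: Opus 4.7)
The plan is to classify the elements of ${\rm PGL}(3,\mathbb{C})$ which conjugate one normalized triple to another, and then to read off the invariants of the induced action on the parameters $(\rho,\sigma,\tau)$.

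First I would argue that any conjugator $g$ satisfying $gR_jg^{-1}=R'_j$ for $j=1,2,3$ must be diagonal. Indeed, $\mathbb{C}e_j$ is the simple $e^{2\pi i/p}$-eigenspace of both $R_j$ and $R'_j$, so the relation forces $ge_j\in\mathbb{C}e_j$ for each $j$. Writing $g={\rm diag}(d_1,d_2,d_3)$, the $(1,2)$-entry of $R_1$ transforms as $\rho\mapsto(d_1/d_2)\rho$, while the $(2,1)$-entry of $R_2$ transforms as $\bar\rho\mapsto(d_2/d_1)\bar\rho$; requiring both to match the normal form of $(R'_1,R'_2)$ simultaneously forces $|d_1|=|d_2|$, and analogously $|d_2|=|d_3|$. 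Modulo the scalar centre of ${\rm GL}(3,\mathbb{C})$ we may set $d_j=e^{i\theta_j}$, so the induced parameter transformation is
\[
(\rho',\sigma',\tau')=\bigl(e^{i(\theta_1-\theta_2)}\rho,\ e^{i(\theta_2-\theta_3)}\sigma,\ e^{i(\theta_3-\theta_1)}\tau\bigr).
\]

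This immediately yields the invariance of $|\rho|,|\sigma|,|\tau|$ and of the product $\rho\sigma\tau$, giving the forward implication in case~(1). For the converse of case~(1), assuming matching moduli and $\arg(\rho'\sigma'\tau')=\arg(\rho\sigma\tau)$, I would take $\theta_1-\theta_2=\arg(\rho'/\rho)$ and $\theta_2-\theta_3=\arg(\sigma'/\sigma)$; the remaining difference $\theta_3-\theta_1$ is then forced by the cyclic constraint, and the argument hypothesis is exactly what ensures it matches $\arg(\tau'/\tau)$. Case~(3), where $\rho\sigma\tau=0$, is similar: the vanishing of one parameter removes one of the three phase equations and leaves enough freedom to match the remaining moduli without any argument condition.

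The main obstacle is case~(2), the $p=2$ case, where the proposition allows the extra sign ambiguity $\arg(\rho'\sigma'\tau')=\pm\arg(\rho\sigma\tau)$. The diagonal-torus analysis above only accounts for the $+$ sign, so the $-$ sign must come from a symmetry special to involutions, in which each $R_j$ has real eigenvalues $-1,1,1$ and satisfies $R_j=R_j^{-1}$. I expect this extra freedom to reflect an additional discrete equivalence between the normalized triples with conjugate parameter triples $(\rho,\sigma,\tau)$ and $(\bar\rho,\bar\sigma,\bar\tau)$ that is available only at $p=2$, and I would work out the explicit conjugating element by following the case analyses in Pratoussevitch~\cite{AP} and Thompson~\cite{Thompson}.
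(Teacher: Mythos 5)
Your reduction to diagonal conjugators is sound and it settles cases (1) and (3): for every $p\ge 2$ the $e^{2\pi i/p}$-eigenvalue of $R_j$ is simple with eigenspace ${\mathbb C}e_j$, so any conjugator preserves the coordinate lines, and (after observing that the scalar ambiguities allowed by working in ${\rm PGL}(3,{\mathbb C})$ are forced to be $1$ by comparing the eigenvalue multisets $\{e^{2\pi i/p},1,1\}$) your phase bookkeeping gives exactly the invariants $|\rho|,\,|\sigma|,\,|\tau|$ and $\rho\sigma\tau$, together with the converse construction. Note that the paper offers no proof of this proposition, referring instead to Proposition 3.3 of \cite{DPP}, Section 10 of \cite{AP} and Section 2.3 of \cite{Thompson}, so there is no in-paper argument to compare with; for (1) and (3) your route is the expected one.

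The genuine gap is case (2), which you leave entirely to the references, and the completion you envisage cannot work as described. Your own rigidity argument applies verbatim at $p=2$: the $(-1)$-eigenspace of $R_j$ is still ${\mathbb C}e_j$, so any conjugator in ${\rm PGL}(3,{\mathbb C})$ is again diagonal and therefore preserves $\rho\sigma\tau$ exactly. Concretely, ${\rm tr}(R_1R_2R_3)=e^{2\pi i/p}\bigl(3-|\rho|^2-|\sigma|^2-|\tau|^2+\rho\sigma\tau\bigr)$ is a conjugation invariant, so no element of ${\rm PGL}(3,{\mathbb C})$ can carry a triple with parameters $(\rho,\sigma,\tau)$ to one with the same moduli but $\arg(\rho'\sigma'\tau')=-\arg(\rho\sigma\tau)$ unless $\rho\sigma\tau$ is real. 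The extra freedom at $p=2$ is not an ``explicit conjugating element'' waiting to be found in ${\rm PGL}(3,{\mathbb C})$; it is the antiholomorphic symmetry. Complex conjugation turns a reflection of angle $2\pi/p$ into one of angle $-2\pi/p$ and replaces $(\rho,\sigma,\tau)$ by $(\bar\rho,\bar\sigma,\bar\tau)$, and only for $p=2$ does $-2\pi/p$ coincide with $2\pi/p$ modulo $2\pi$, so only there does this operation stay inside the normalized family --- exactly the mechanism the paper invokes in the paragraph following the proposition. To finish you must therefore either establish case (2) with the equivalence understood to include antiholomorphic conjugation (conjugacy in the full isometry group rather than in ${\rm PGL}(3,{\mathbb C})$ alone), or record that under the literal reading of ``conjugate in ${\rm PGL}(3,{\mathbb C})$'' the minus sign does not occur; deferring to \cite{AP} and \cite{Thompson} does not supply this step.
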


Note that this result means we can freely choose the arguments of two of 
$\rho$, $\sigma$ and $\tau$. Furthermore, if $\rho\sigma\tau=0$
we may assume, without loss of generality, that $\sigma=0$ and that 
$\rho$, $\tau$ are non-negative real numbers.

Let $n$ be a natural number. We say that $A$ and $B$ satisfy a braid relation 
of length $n$, written ${\rm br}_n(A,B)$, if
$$
(AB)^{n/2}=(BA)^{n/2}.
$$
This notation means the alternating products of $A$ and $B$ with $n$ terms 
starting with $A$ and $B$ respectively. So ${\rm br}_2(A,B)$ is $AB=BA$, that 
is $A$ and $B$ commute, and ${\rm br}_3(A,B)$ is $ABA=BAB$, which is the 
classical braid relation. We write ${\rm br}(A,B)=n$ for the smallest
positive integer for which the braid relation ${\rm br}(A,B)_n$ holds.

Using Pratoussevitch's formulae \cite{AP}, or by direct calculation we find: 
$$
\begin{array}{ll}
{\rm tr}(R_1R_2)=e^{2\pi i/p}(2-|\rho|^2)+1,\quad &
{\rm tr}(R_2R_3)=e^{2\pi i/p}(2-|\sigma|^2)+1,\\
{\rm tr}(R_3R_1)=e^{2\pi i/p}(2-|\tau|^2)+1,\quad &
{\rm tr}(R_1R_3^{-1}R_2R_3)=e^{2\pi i/p}(2-|\sigma\tau-\bar\rho|^2)+1.
\end{array}
$$
Using this, following Proposition~2.3 of \cite{DPP}, we observe that 
\begin{itemize}
\item if $|\rho|=2\cos(\pi/c)$ with $c\in{\mathbb N}$ 
then ${\rm br}(R_1,R_2)=c$;
\item if $|\sigma|=2\cos(\pi/a)$ with $a\in{\mathbb N}$ 
then ${\rm br}(R_2,R_3)=a$;
\item if $|\tau|=2\cos(\pi/b)$ with $b\in{\mathbb N}$ 
then ${\rm br}(R_3,R_1)=b$;
\item if $|\sigma\tau-\bar\rho|=2\cos(\pi/d)$ 
with $d\in{\mathbb N}$ then ${\rm br}(R_1,R_3^{-1}R_2R_3)=d$.
\end{itemize}
In the case above we say the group has {\bf braiding parameters} $(a,b,c;d)$. 
Hence the braiding parameters $(a,b,c;d)$ completely determine 
$|\rho|$, $|\sigma|$, $|\tau|$ and $\Re(\rho\sigma\tau)$. Thus,
together with $p$, they almost determine two possible groups 
$\langle R_1,R_2,R_3\rangle$ up to conjugacy, the ambiguity
coming from the sign of $\Im(\rho\sigma\tau)$; see Remark~3.1 of \cite{DPP}.
However, conjugating by an antiholomorphic map (for example complex
conjugation) has the effect of changing the sign of both $p$ and
$\arg(\rho\sigma\tau)$. Hence we may assume $\arg(\rho\sigma\tau)\in[0,\pi]$
if we allow $p$ to be negative. Thus, it follows from 
Proposition~\ref{prop-parameters} that, if we allow $p$ to be negative,
the braiding parameters $(a,b,c;d)$ determine the group 
$\langle R_1,R_2,R_3\rangle$ up to conjugacy (possibly by an
antiholomorphic map). 

There is a special case when $\sigma=0$ and so $a=2$. This means
that ${\rm br}(R_2,R_3)=2$, that is $R_2$ and $R_3$ commute. Also, 
$\arg(\rho\sigma\tau)$ is not defined, since $\sigma=0$. Moreover, 
we have $2\cos(\pi/d)=|\bar\rho|=2\cos(\pi/c)$ and so $d=c$.
As we observed above, $|\rho|$ and $|\tau|$ completely determine the group
and so, together with $p$, the parameters $b$ and $c$ determine the group,
which has braiding parameters $(2,b,c;c)$.

\medskip

\subsection{Lattices and arithmeticity}

In the tables below we write down the four braiding parameters $(a,b,c;d)$ 
(as described in the previous section) for each of the lattices constructed 
in Deraux-Parker-Paupert \cite{DPP}.
For each of these sets of parameters we give the values of $p$ (which may
be negative) where
the corresponding group is discrete. Depending on the signature of
the Hermitian form $H$ these act on one of ${\bf P}^2_{\mathbb C}$, 
${\bf E}^2_{\mathbb C}$ or ${\bf H}^2_{\mathbb C}$. In the last of these
cases, we write the value of $p$ in bold face when 
$\langle R_1,R_2,R_3\rangle$ is non-arithmetic. The arithmeticity criterion
we use is due to Mostow \cite{Mostow-RCP}. We will not go into details
of how to apply this criterion here, since it has been discussed 
at length for these groups in the paper \cite{DPP}. 
In the next section (Propositions~\ref{prop-levelt-3fold}, 
\ref{prop-levelt-2fold}, \ref{prop-levelt-334}, \ref{prop-levelt-23n}
and \ref{prop-levelt-344} respectively) we will show that all of these 
groups are monodromy groups of higher hypergeometric functions except 
those for $(3,4,4;4)$ with $p$ a multiple of $3$. We indicate these 
values of $p$ in parenthesis.

In the last column we give notes indicating more information about
these groups. If a group is written in square brackets then it indicates
that the two groups are commensurable but that the standard generators
do not correspond to the generators we give. 
When the braiding parameters are $(3,3,3;m)$ then $\langle R_1,R_2,R_3\rangle$
is the Deligne-Mostow group $\Gamma(p,t)$ where $t=1/p+2/m-1/2$ is
Mostow's parameter; see
Section A.9 of \cite{DPP}. The sporadic groups ${\mathcal S}(p,-)$ and 
Thomson groups ${\mathcal T}(p,-)$ are described
in Sections A.1 to A.8 of \cite{DPP}. Commensurability relations between
them are given in Section 7 of \cite{DPP}. The relationship between these
groups and the Couwenberg-Heckman-Looijenga lattices is given in
Theorem 1 of Deraux \cite{Deraux-CHL}. The groups with rotations of order 
$p$ and braiding parameters $(n,n,n;n)$ for $n=4,\,5$ are subgroups of
$\Gamma(n,1/n+2/p-1/2)$, that is lattices with rotations of 
order $n$ and braiding parameters $(3,3,3;p)$; see Proposition 5.1 of 
Parker and Paupert \cite{PP}.
Finally, we indicate which of the Beukers-Heckman groups (acting
on ${\bf P}_{\mathbb C}^2$) lie in this
family; see Propositions~\ref{prop-compare-bh2-4}, \ref{prop-compare-bh-5-6},
\ref{prop-bh-9-10-11}, \ref{prop-compare-bh3}, \ref{prop-compare-bh12} 
and \ref{prop-compare-bh-7-8} below.

\begin{enumerate}
\item[(1)] Three-fold symmetry $(n,n,n;m)$
$$
\begin{array}{|rr|l|l|l|l|}
\hline
 n & m & {\bf P}^2_{\mathbb C} & {\bf E}^2_{\mathbb C} &
 {\bf H}^2_{\mathbb C} & \hbox{Notes} \\
%n & m & (+,+,+) & (+,+,0) & (+,+,-) & \hbox{Notes} \\
\hline
3 & 2 & 2,\ 3 & 4 & 5,\ 6,\ 7,\ 8,\ {\bf 9},\ 10,\ 12,\ 18 &
\Gamma(p,1/p+1/2),\ {\mathcal C}(p,G_{25}),\ {\rm BH}11 \\
3 & 3 & 2 & & 4,\ 5,\ 6,\ {\bf 7},\ {\bf 8}, 9,\ {\bf 10},\ 12,\ 18 &
\Gamma(p,1/p+1/6),\, [{\rm BH}12] \\
3 & 4 & 2 & & 3,\ 4,\ {\bf 5},\ {\bf 6},\ 8,\ 12 & \Gamma(p,1/p) \\
3 & 5 & 2 & & 3,\ {\bf 4},\ 5,\ 10 & \Gamma(p,1/p-1/10) \\
3 & 6 & 2 & & 3, {\bf 4},\ {\bf 6} & \Gamma(p,1/p-1/6) \\
3 & 7 & 2 & & {\bf 3},\ 7 & \Gamma(p,1/p-3/14) \\
3 & 8 & 2 & & 3,\ 4 & \Gamma(p,1/p-1/4) \\
3 & 9 & 2 & & 3 & \Gamma(p,1/p-5/18) \\
3 & 10 & 2 & & 3 & \Gamma(p,1/p-3/10) \\
3 & 12 & 2 & & 3 & \Gamma(p,1/p-1/3) \\
4 & 3 & 2 & & 3,\ {\bf 4},\ {\bf 5},\ {\bf 6},\ {\bf 8}, {\bf 12} & 
{\mathcal S}(p,\bar\sigma_4),{\mathcal C}(p,G_{24}),\ {\rm BH}2 \\
4 & 4 & & 2 & 3,\ 4,\ {\bf 5},\ {\bf 6},\ 8,\ 12 & 
[\Gamma(4,2/p-1/4)] \\
4 & 5 & & & 2,\ {\bf 3},\ {\bf 4} & {\mathcal S}(p,\sigma_5) \\
5 & 3 & 2 & & 3,\ 4,\ 5,\ 10 & {\mathcal S}(p,\sigma_{10}),\ 
{\mathcal C}(p,G_{23}),\ {\rm BH}5,\, \\
5 & 5 & & & 2,\ 3,\ {\bf 4},\ 5,\ 10 & [\Gamma(5,2/p-3/10)] \\
6 & 4 & & 2 & {\bf 3},\ {\bf 4}, {\bf 6} & {\mathcal S}(p,\sigma_1) \\
\hline
\end{array}
$$
\item[(2)] Two-fold symmetry $(n,n,m;m)$
$$
\begin{array}{|rr|l|l|l|l|}
\hline
n & m & {\bf P}^2_{\mathbb C} & {\bf E}^2_{\mathbb C} &
{\bf H}^2_{\mathbb C} & \hbox{Notes} \\
%n & m & (+,+,+) & (+,+,0) & (+,+,-) & \hbox{Notes} \\
\hline
3 & 3 & 2 & & 4,\ 5,\ 6,\ {\bf 7},\ {\bf 8}, 9,\ {\bf 10},\ 12,\ 18 &
{\rm BH}12 \\
3 & 4 & 2 & & 3,\ {\bf 4},\ {\bf 5},\ {\bf 6},\ {\bf 8}, {\bf 12} & 
{\mathcal T}(p,{\bf S}_1) \\
3 & 5 & & & 2,\ {\bf 3},\ 5,\ 10,\ -5 & {\mathcal T}(p,{\bf H}_2) \\
4 & 3 & 2 & 3 & 4,\ 5,\ 6,\ 8,\ 12 & [{\mathcal T}(p,{\bf S}_4)] \\
4 & 4 & & 2 & 3,\ 4,\ {\bf 5},\ {\bf 6},\ 8,\ 12 & [\Gamma(4,2/p-1/4)] \\
5 & 4 & 2 & & 3,\ {\bf 4},\ {\bf 5} & [{\mathcal T}(p,{\bf S}_2)] \\
5 & 5 & & & 2,\ 3,\ {\bf 4},\ 5,\ 10 & [\Gamma(5,2/p-3/10)] \\
\hline
\end{array}
$$
\item[(3)] $(3,3,4; n)$
$$
\begin{array}{|r|l|l|l|l|}
\hline
n & {\bf P}^2_{\mathbb C} & {\bf E}^2_{\mathbb C} &
{\bf H}^2_{\mathbb C} & \hbox{Notes} \\
\hline
3 & 2 & & 3,\ 4,\ {\bf 5},\ {\bf 6},\ 8,\ 12 & [\Gamma(p,1/p)] \\
4 & 2 & & 3,\ {\bf 4},\ {\bf 5},\ {\bf 6},\ {\bf 8}, {\bf 12} & 
{\mathcal T}(p,{\bf S}_1),\ [{\mathcal S}(p,\bar\sigma_4)] \\
5 & 2 & & 3,\ {\bf 4},\ {\bf 5} & {\mathcal T}(p,{\bf S}_2),\ 
{\mathcal C}(p,G_{27}),\ {\rm BH}7 \\
6 & & 2 & {\bf 3},\ {\bf 4},\ {\bf 5} & 
{\mathcal T}(p,{\bf E}_1),\ [{\mathcal S}(p,\sigma_1)] \\
7 & & & 2,\ 7 & {\mathcal T}(p,\bar{\bf H}_1) \\
\hline
\end{array}
$$
\item[(4)] $(2,3,n;n)$
$$
\begin{array}{|r|l|l|l|l|}
\hline
 n & {\bf P}^2_{\mathbb C} & {\bf E}^2_{\mathbb C} &
 {\bf H}^2_{\mathbb C} & \hbox{Notes} \\
\hline
3 & 2,\ 3 & 4 & 5,\ 6,\ 7,\ 8,\ {\bf 9},\ 10,\ 12,\ 18 & 
{\mathcal T}(p,{\bf S}_3),\ [\Gamma(p,1/p+1/2)] \\
4 & 2 & 3 & 4,\ 5,\ 6,\ 8,\ 12 & {\mathcal T}(p,{\bf S}_4),\ 
{\mathcal C}(p,G_{26}),\ [\Gamma(p,3/p-1/2)] \\
5 & 2 &  & 3,\ 4,\ 5,\ 10 & {\mathcal T}(p,{\bf S}_5), 
[{\mathcal S}(p,\sigma_{10})] \\
6 & & 2 & 3,\ 4,\ 6 & {\mathcal T}(p,{\bf E}_3)  \\
\hline
\end{array}
$$

\item[(5)] $(3,4,4;4)$ 
$$
\begin{array}{|l|l|l|l|}
\hline
{\bf P}^2_{\mathbb C} & {\bf E}^2_{\mathbb C} &
{\bf H}^2_{\mathbb C} & \hbox{Notes} \\
\hline
& 2 & (3),\ {\bf 4},\ (6),\ (12) & {\mathcal T}(p,{\bf E}_2) \\
\hline
\end{array}
$$
\end{enumerate}

\section{Groups generated by three reflections as higher 
monodromy groups}\label{sec-main-proof}

In this section we prove the main result of the paper. To do so, we
compare the hypergeometric groups described in 
Section~\ref{sec-higher-monodromy} and the groups generated by
three complex reflections described in Section~\ref{sec-triangle-groups},
in particular non-arithmetic lattices in ${\rm PU}(2,1)$ with this property. 
The main issue is that the hypergeometric groups have two generators whereas
the complex reflection groups have three generators. We get around this
in several ways. First, for some groups there is an extra symmetry, 
which means that the group generated by complex reflections has finite
index in a two-generator group. Secondly, for other groups we use known 
relations between the generators to find a generating set with two
elements. In the final subsection, we discuss how this approach goes wrong 
for the groups with braiding parameters $(3,4,4;4)$ and angle $2\pi/p$ where 
$p$ is divisible by 3. 

We consider the groups from \cite{DPP} in families corresponding to the
tables is Section~\ref{sec-triangle-groups}, treating each family in
a separate section. For each family of groups (possibly by adjoining
symmetries) we give generators satisfying 
Levelt's criterion from Theorem~\ref{thm-levelt}, showing
that the reflection group is (commensurable to) a hypergeometric group.
We explicitly write down the parameters $\alpha_j$ and $\beta_j$ of the 
associated hypergeometric equation \eqref{eq-HG-eq} for each group. 
We also give connections of these groups to Beukers and Heckman's
groups BH2 to BH12 \cite{BH}. Finally, for the first two families we show
how to pass between the Hermitian form $D$ given in \cite{BH} and the
form $H$ given in \cite{DPP}. A similar construction applies in the
other cases, but the precise expressions are more complicated.

\subsection{Lattices with three-fold symmetry}

We suppose we have braid relations
$$
{\rm br}(R_1,R_2)={\rm br}(R_2,R_3)={\rm br}(R_3,R_1)=n,\quad 
{\rm br}(R_1,R_3^{-1}R_2R_3)=m.
$$
(Such groups with $n=3$ were studied by Mostow \cite{Mostow-RCP}.)
In this case $\rho=\sigma=\tau$ and there is a symmetry $J$ which
(projectively) has order 3 and satisfies
$R_2=JR_1J^{-1}$ and $R_3=JR_2J^{-1}=J^{-1}R_1J$. The parameter $\tau$ is
determined up to complex conjugation by
$$
|\tau|=2\cos(\pi/n),\quad |\tau^2-\bar\tau|=2\cos(\pi/m).
$$
The particular values of $\tau$ giving rise to lattices are:
$$
\begin{array}{|lll|lll|}
\hline
n & m & \tau & n & m & \tau \\
\hline
3 & k & -e^{-2\pi i/3k} & k & k & e^{4\pi i/3k}+e^{-2\pi i/3k} \\
4 & 3 & (-1-i\sqrt{7})/2 & 4 & 5 & e^{-i\pi/9}(\sqrt{5}+i\sqrt{3})/2 \\
5 & 3 & (1+\sqrt{5})/2 & 6 & 4 & -1+i\sqrt{2} \\
\hline
\end{array}
$$
In the $(3,k)$ case we get $k=2$, $3$, $4$, $5$, $6$, $7$, $8$, $9$,
$10$ or $12$; in the $(k,k)$ case we get a lattice when $k=3$, $4$ or $5$.

The group $\langle R_1,J\rangle$ contains $\langle R_1,R_2,R_3\rangle$
as a subgroup of index 1 or 3. It has generators:
\begin{equation}\label{eq-AB-3fold}
A=J=\left(\begin{matrix} 
0 & 0 & e^{-2\pi i/p} \\ 1 & 0 & 0 \\ 0 & 1 & 0 \end{matrix}\right), \quad
B=R_1J=\left(\begin{matrix}
\tau & -\bar\tau & 1 \\ 1 & 0 & 0 \\ 0 & 1 & 0 \end{matrix}\right).
\end{equation}
Observe that these matrices are in the form \eqref{eq-A-general} and 
\eqref{eq-B-general}. Moreover, $BA^{-1}=R_1$ is a complex reflection. 
Therefore, we have:

\begin{proposition}\label{prop-levelt-3fold}
The matrices $A$ and $B$ given by \eqref{eq-AB-3fold} satisfy Levelt's 
criterion and so $\langle R_1,J\rangle$ is a hypergeometric group.
\end{proposition}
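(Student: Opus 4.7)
The plan is a direct check of the three hypotheses in Levelt's criterion (Theorem~\ref{thm-levelt}) for the pair $(A,B) = (J, R_1 J)$ displayed in \eqref{eq-AB-3fold}. Both matrices already appear in the transposed companion form \eqref{eq-AB-general}: the lower $2\times 3$ block is the cyclic shift of the identity, and the top row records the negated coefficients of the characteristic polynomial. Reading these off, the characteristic polynomial of $A$ is $t^3 - e^{-2\pi i/p}$, so its eigenvalues are $a_j = \omega^{j-1} e^{-2\pi i/(3p)}$ with $\omega = e^{2\pi i/3}$; the characteristic polynomial of $B$ is $t^3 - \tau t^2 + \bar\tau t - 1$, whose roots are the $b_j$.

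The rank-one hypothesis on $BA^{-1}$ requires no calculation: $BA^{-1} = (R_1 J) J^{-1} = R_1$, and by the construction in Section~\ref{sec-triangle-groups} the element $R_1$ is a complex reflection with angle $2\pi/p$, so $R_1 - I$ has rank one. The only remaining item is to verify the disjointness $a_j \neq b_k$ for all $j,k$. A coincidence $a_j = b_k$ with common eigenvector $v$ would force $R_1 v = (BA^{-1})v = v$, placing the explicit $J$-eigenvector $(a_j^2, a_j, 1)^T$ into the fixed hyperplane of $R_1$; this imposes a single algebraic constraint on $\tau$, which one then checks fails at each of the finite list of lattice values of $\tau$ tabulated before the statement. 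The case of a shared eigenvalue with linearly independent eigenvectors is handled analogously by restricting $R_1$ to the $a_j$-eigenline of $J$, which is transverse to the fixed hyperplane of $R_1$.

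With these three items in hand, Theorem~\ref{thm-levelt} immediately identifies $\langle A, B\rangle = \langle J, R_1\rangle$ as a hypergeometric group with parameters $(a_j; b_j)$. The only genuine obstacle is the disjointness check, and it reduces to a finite case analysis rather than a structural difficulty; the companion-matrix form and the rank-one reflection condition were built into the choice of generators from the outset, so the core of the proof is bookkeeping.
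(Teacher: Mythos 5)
Your proof is correct and takes essentially the same route as the paper, whose entire argument is the observation that $A=J$ and $B=R_1J$ in \eqref{eq-AB-3fold} are already in the companion forms \eqref{eq-A-general}--\eqref{eq-B-general} and that $BA^{-1}=R_1$ is a complex reflection, so Theorem~\ref{thm-levelt} applies. The eigenvalue-disjointness check you sketch is not carried out in the paper at all; note also that since both matrices are companion matrices, a coincidence $a_j=b_k$ automatically produces the common eigenvector $(a_j^2,a_j,1)^{T}$ by \eqref{eq-aj-Baj}, so your second subcase (shared eigenvalue without a shared eigenvector) never occurs and the verification reduces entirely to your first case.
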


Note that we have ${\rm det}(A)=1$, ${\rm tr}(A)=\tau$, 
${\rm det}(B)=e^{-2\pi i/p}$ and ${\rm tr}(B)=0$. 
From this we can find the eigenvalues $a_j=e^{2\pi i\alpha_j}$
and $b_j=e^{2\pi i\beta_j}$ of $A$ and $B$. The parameters $\alpha_j$
and $\beta_j$ are given, up to a scalar shift, by:
$$
\begin{array}{|ll|lll|lll|}
\hline
n & m & \alpha_1 & \alpha_2 & \alpha_3 & \beta_1 & \beta_2 & \beta_3 \\
\hline
3 & k & 1/3-1/3p & 2/3-1/3p & 1-1/3p & 1/6k & 1/2-1/3k & 1/2+1/6k 
\\
4 & 3 & 1/3-1/3p & 2/3-1/3p & 1-1/3p & 3/7 & 5/7 & 6/7 
\\
4 & 4 & 1/3-1/3p & 2/3-1/3p & 1-1/3p & 1/12 & 1/6 & 3/4 
\\
4 & 5 & 1/3-1/3p & 2/3-1/3p & 1-1/3p & 1/9 & 13/90 & 67/90 
\\
5 & 3 & 1/3-1/3p & 2/3-1/3p & 1-1/3p & 0 & 1/5 & 4/5 
\\
5 & 5 & 1/3-1/3p & 2/3-1/3p & 1-1/3p & 1/10 & 2/15 & 23/30 
\\
6 & 4 & 1/3-1/3p & 2/3-1/3p & 1-1/3p & 1/8 & 3/8 & 1/2 
\\
\hline
\end{array}
$$

Comparing these parameters with those from Beukers and Heckman, we can identify
several of the Beukers-Heckman groups, possibly after a scalar shift and
a change of generators.

\begin{proposition}\label{prop-compare-bh2-4}
The groups BH2 and BH4 are isomorphic to the group $\langle R_1,J\rangle$
in the triangle group with braiding parameters $(4,4,4;3)$ 
and $p=2$; that is ${\mathcal S}(2,\bar\sigma_4)$. Specifically:
\begin{enumerate}
\item[(1)] The group BH2 is a scalar shift of ${\mathcal S}(2,\bar\sigma_4)$
with generators $A=R_1J$ and $B=J$.
\item[(2)] The group BH4 is ${\mathcal S}(2,\bar\sigma_4)$ with
generators $A=J^{-1}R_1J^{-1}$ and $B=(R_1J^{-1})^2$.
\end{enumerate}
\end{proposition}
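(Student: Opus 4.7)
The plan is to apply Theorem~\ref{thm-levelt} in each case: verify Levelt's criterion that $BA^{-1}$ is a complex reflection, match the characteristic polynomials of $A$ and $B$ with the Beukers--Heckman parameters for BH2 and BH4, and conclude by the uniqueness clause of the theorem.

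First I would check Levelt's criterion. In the BH2 case,
$$
BA^{-1}=J(R_1J)^{-1}=JJ^{-1}R_1^{-1}=R_1^{-1},
$$
and in the BH4 case,
$$
BA^{-1}=(R_1J^{-1})^2(J^{-1}R_1J^{-1})^{-1}
=R_1J^{-1}R_1J^{-1}\cdot JR_1^{-1}J
=R_1J^{-1}R_1R_1^{-1}J=R_1.
$$
In both cases $BA^{-1}$ is one of the defining complex reflections, so Levelt's criterion holds.

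Next I would match the eigenvalues. For BH2 the eigenvalues of $A=R_1J$ and $B=J$ are read off the parameter table preceding the proposition (with $n=4$, $m=3$, $p=2$): the exponents are $(3/7,5/7,6/7)$ and $(1/6,1/2,5/6)$ respectively. A scalar shift by $d=-1$ (allowed since $|d|=1$) subtracts $1/2$ from each exponent mod $1$, giving $(13/14,3/14,5/14)$ and $(2/3,0,1/3)$, which are exactly BH2's $\alpha$ and $\beta$. For BH4 the generators involve $J^{-1}$, so the eigenvalues must be computed directly: conjugation by $J$ realises $J^{-1}R_1J^{-1}$ as $R_1J^{-2}$, which can be written out in the coordinates of \eqref{eq-AB-3fold} to read off $\chi_A$; similarly one writes out $R_1J^{-1}$ and finds ${\rm tr}(R_1J^{-1})=\bar\tau=(-1+i\sqrt{7})/2$ and $\det(R_1J^{-1})=1$. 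Combined with the Gauss-sum identity
$$
e^{2\pi i/7}+e^{4\pi i/7}+e^{8\pi i/7}=\tfrac{-1+i\sqrt{7}}{2},
$$
this forces the eigenvalues of $R_1J^{-1}$ to be $\{e^{2\pi i/7},e^{4\pi i/7},e^{8\pi i/7}\}$; squaring preserves this set, so $B=(R_1J^{-1})^2$ has eigenvalue exponents $(1/7,2/7,4/7)$, matching BH4's $\beta$. The analogous identity for primitive $14$th roots of unity (applied to $\chi_A$) yields BH4's $\alpha=(3/14,5/14,13/14)$.

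The main obstacle is pinning down the individual eigenvalues of $R_1J^{-1}$: knowing only its trace and determinant leaves a sign ambiguity among the three square roots, and this is resolved by recognising the quadratic-residue Gauss sum above (ultimately reflecting $\sqrt{-7}\in{\mathbb Q}(\zeta_7)$). Once the eigenvalues are in hand, the uniqueness clause of Theorem~\ref{thm-levelt} identifies $\langle A,B\rangle$ with BH2 (up to scalar shift) and with BH4 (directly), completing the proof.
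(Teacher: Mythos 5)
Your route is essentially the paper's: check that $BA^{-1}$ is one of the reflections, match the parameter sets (a scalar shift by $1/2$ for BH2, read off the table after swapping the roles of $A$ and $B$), and compute the eigenvalues of $J^{-1}R_1J^{-1}$ and of $(R_1J^{-1})^2$ directly for BH4. Those computations are correct as far as they go. One small point of rigour: trace and determinant alone do not determine the characteristic polynomial of a $3\times 3$ matrix, so your ``Gauss-sum'' step needs the extra input that the group preserves a (here positive definite) Hermitian form, whence ${\rm tr}(M^{-1})=\overline{{\rm tr}(M)}$ and $\chi_M(t)=t^3-{\rm tr}(M)\,t^2+\det(M)\overline{{\rm tr}(M)}\,t-\det(M)$; with that, the eigenvalue sets you claim do follow (the paper is equally terse here, so this is only a remark).

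The genuine gap is in part (2): you stop at ``the uniqueness clause of Theorem~\ref{thm-levelt} identifies $\langle A,B\rangle$ with BH4,'' but that only shows that the subgroup generated by the two particular words $A=J^{-1}R_1J^{-1}$ and $B=(R_1J^{-1})^2$ is conjugate to BH4. The proposition asserts more, namely that BH4 is the whole group $\langle R_1,J\rangle={\mathcal S}(2,\bar\sigma_4)$ with these generators, so you must also prove $\langle A,B\rangle=\langle R_1,J\rangle$; a priori these two words could generate a proper subgroup, and nothing in Levelt's theorem rules that out. The paper closes this by using the eigenvalue computation a second time: since the eigenvalues of $R_1J^{-1}$ are distinct primitive $7$th roots of unity, $R_1J^{-1}$ has order $7$, so $B^3=(R_1J^{-1})^6=(R_1J^{-1})^{-1}$, hence $J^{-1}=AB^3$ and $R_1=BA^{-1}$ both lie in $\langle A,B\rangle$, giving $\langle A,B\rangle=\langle R_1,J\rangle$. (In part (1) generation is immediate, since $J=B$ and $R_1=AB^{-1}$, so no issue arises there.) Adding this short argument would complete your proof.
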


\begin{proof}
The group ${\mathcal S}(2,\bar\sigma_4)$ has $p=2$ and 
$\tau=(-1-i\sqrt{7})/2=e^{6\pi i/7}+e^{10\pi i/7}+e^{12\pi i/7}$.
\begin{enumerate}
\item[(1)]
The parameters of ${\mathcal S}(2,\bar\sigma_4)$ are
$3/7$, $5/7$, $6/7$; $1/6$, $1/2$, $5/6$. Adding $1/2$ to each of these
(mod 1) and reordering (including swapping the roles of $A$ and $B$) gives
$0$, $1/3$, $2/3$; $3/14$, $5/14$, $13/14$, which are the parameters of 
BH2. This proves (1). 
\item[(2)] In the group ${\mathcal S}(2,\bar\sigma_4)$ write 
$A=J^{-1}R_1J^{-1}$ and $B=(R_1J^{-1})^2$. Note that $BA^{-1}=R_1$.
We calculate
$$
{\rm det}(A)={\rm det}(J^{-1}R_1J^{-1})=-1, \quad
{\rm tr}(A)={\rm tr}(J^{-1}R_1J^{-1})=-\tau. 
$$
Thus the eigenvalues of $J^{-1}R_1J^{-1}$ are $-e^{10\pi i/7}$,
$-e^{12\pi i/7}$, $-e^{6\pi i/7}$, so $\alpha_1=3/14$, $\alpha_2=5/14$,
$\alpha_3=13/14$.
Similarly,
$$
{\rm det}(R_1J^{-1})=1,\quad {\rm tr}(R_1J^{-1})=\bar\tau.
$$
Therefore the eigenvalues of $R_1J^{-1}$ are $e^{2\pi i/7}$, $e^{4\pi i/7}$ and
$e^{8\pi i/7}$. Hence the eigenvalues of $B=(R_1J^{-1})^2$ are also
$e^{2\pi i/7}$, $e^{4\pi i/7}$ and $e^{8\pi i/7}$. Therefore $\beta_1=1/7$,
$\beta_2=2/7$ and $\beta_3=4/7$. This means that the parameters of 
$\langle A,B\rangle$ are the same as the parameters of BH4.

It remains to show that $A$ and $B$ generate $\langle R_1,J\rangle$.
Since $R_1J^{-1}$ has order 7 so $B^3=(R_1J^{-1})^6=(R_1J^{-1})^{-1}$. 
Therefore $R_1=BA^{-1}$ and $J^{-1}=AB^3$
so $\langle A,B\rangle=\langle R_1,J\rangle$.
\end{enumerate}
\end{proof}

\medskip

\begin{proposition}\label{prop-compare-bh-5-6}
The groups BH5 and BH6 are isomorphic to the group $\langle R_1,J\rangle$
in the triangle group with braiding
parameters $(5,5,5;3)$ and $p=2$; that is ${\mathcal S}(2,\sigma_{10})$.
Specifically:
\begin{enumerate}
\item[(1)] The group BH5 is ${\mathcal S}(2,\sigma_{10})$ with generators
$A=R_1J$ and $B=J$.
\item[(2)] The group BH6 is ${\mathcal S}(2,\sigma_{10})$ with
generators $A=J^{-1}R_1J^{-1}$ and $B=(R_1J^{-1})^2$.
\end{enumerate}
\end{proposition}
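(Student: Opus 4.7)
The plan is to mirror the proof of Proposition~\ref{prop-compare-bh2-4}: for each part I write down the candidate generators $A,B$, check that $BA^{-1}$ is a complex reflection so Levelt's criterion applies, compute the characteristic polynomials of $A$ and $B$ to extract the parameters $\alpha_j,\beta_j$, compare with the Beukers--Heckman table (up to a scalar shift if necessary), and finally verify $\langle A,B\rangle=\langle R_1,J\rangle$. Throughout one works in ${\mathcal S}(2,\sigma_{10})$, where $p=2$ and $\tau=(1+\sqrt{5})/2$ is the golden ratio, in particular $\tau^{2}=\tau+1$.

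For part~(1) I observe first that $BA^{-1}=J(R_1J)^{-1}=R_1^{-1}$ is a complex reflection. The matrix $B=J$ is already in the form \eqref{eq-AB-3fold}, so its characteristic polynomial is $t^{3}+1$ and $\beta_j\in\{1/6,1/2,5/6\}$. A direct computation gives $\det A=1$, ${\rm tr}(A)=\tau$, and sum of principal $2\times 2$ minors equal to $\tau$, so the characteristic polynomial of $A=R_1J$ factors as $(t-1)\bigl(t^{2}+(1-\tau)t+1\bigr)$; the discriminant together with $\tau^{2}=\tau+1$ shows the roots of the quadratic are $e^{\pm 2\pi i/5}$, giving $\alpha_j\in\{0,1/5,4/5\}$. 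These agree with the BH5 parameters exactly. Since $R_1=AB^{-1}$ and $J=B$, trivially $\langle A,B\rangle=\langle R_1,J\rangle$, and Levelt's criterion finishes this part.

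For part~(2) I first check $BA^{-1}=(R_1J^{-1})^{2}\cdot JR_1^{-1}J=R_1$. To compute the eigenvalues of $A=J^{-1}R_1J^{-1}$ I conjugate by $J$ to $R_1J^{-2}$ and use the identity $J^{3}=e^{-2\pi i/p}I=-I$ (valid for $p=2$), which gives $J^{-2}=-J$; hence $R_1J^{-2}=-R_1J$, and the eigenvalues of $A$ are the negatives of those of $R_1J$, namely $-1$ and $-e^{\pm 2\pi i/5}$, yielding $\alpha_j\in\{3/10,1/2,7/10\}$. For $B=(R_1J^{-1})^{2}$ the same kind of calculation as in~(1) shows that $R_1J^{-1}$ has $\det=1$, ${\rm tr}=\tau$ and eigenvalues $1,e^{\pm 2\pi i/5}$; squaring gives the eigenvalues of $B$ as $1,e^{\pm 4\pi i/5}$, so $\beta_j\in\{0,2/5,3/5\}$. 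These differ from the BH6 values by a uniform shift of $1/2$, that is the scalar shift $d=-1$, which recovers $\alpha_j\in\{0,1/5,4/5\}$ and $\beta_j\in\{1/10,1/2,9/10\}$.

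It remains to verify $\langle A,B\rangle=\langle R_1,J\rangle$. Writing $C=R_1J^{-1}$, its eigenvalues $1,e^{\pm 2\pi i/5}$ are the three fifth roots of unity, so $C$ has order~$5$ and $C=C^{6}=(C^{2})^{3}=B^{3}$. Then $R_1=BA^{-1}$ and $J=C^{-1}R_1=B^{-3}\cdot BA^{-1}=B^{-2}A^{-1}$ both lie in $\langle A,B\rangle$, and the two groups coincide. The step requiring most care is the scalar-shift bookkeeping in~(2): unlike in Proposition~\ref{prop-compare-bh2-4}(2), where the sign from $\det A=-1$ could be absorbed by re-expressing seventh roots in $14$th-root form, here the eigenvalues of $A$ are genuinely tenth (not fifth) roots of unity, so the identification with BH6 becomes literal only after the scalar shift by $d=-1$.
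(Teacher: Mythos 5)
Your proposal is correct and follows essentially the same route as the paper: compute determinants/traces (hence eigenvalue angles) of the proposed generators, match the parameters with the Beukers--Heckman table up to a scalar shift by $1/2$ in case (2), and verify two-generation using that $R_1J^{-1}$ has order $5$ (your $J=B^{-2}A^{-1}$ is just the inverse form of the paper's $J^{-1}=AB^{2}$). Your version is a bit more explicit than the paper's -- part (1) in the paper simply cites the previously tabulated parameters -- and your $\alpha_2=1/2$ corrects an evident typo ($\alpha_2=-1$) in the published argument.
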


\begin{proof}
The group ${\mathcal S}(2,\sigma_{10})$ has $p=2$ and 
$\tau=(1+\sqrt{5})/2=1+e^{2\pi i/5}+e^{8\pi i/5}$.
\begin{enumerate}
\item[(1)] The parameters of BH5 are $0$, $1/5$, $4/5$; $1/6$, $1/2$, $5/6$
which are the same as ${\mathcal S}(2,\sigma_{10})$ after swapping the roles
of $A$ and $B$.
\item[(2)] This proof is very similar to the proof of 
Proposition~\ref{prop-compare-bh2-4}~(2). The eigenvalues of 
$A=J^{-1}R_1J^{-1}$ are $-e^{8\pi i/5}$, $-1$ and $-e^{2\pi i/5}$. Hence we
have $\alpha_1=3/10$, $\alpha_2=-1$ and $\alpha_3=7/10$. The eigenvalues
of $R_1J^{-1}$ are $1$, $e^{2\pi i/5}$ and $e^{8\pi i/5}$ and so the
eigenvalues of $B=(R_1J^{-1})^2$ are $1$, $e^{4\pi i/5}$ and $e^{6\pi i/5}$.
So $\beta_1=0$, $\beta_2=2/5$ and $\beta_3=3/5$. Performing a scalar shift by 
1/2 gives the parameters of BH6. Finally, $R_1=BA^{-1}$ and $J^{-1}=AB^2$.
\end{enumerate}
\end{proof}

\medskip

\begin{proposition}\label{prop-bh-9-10-11}
The groups BH9, BH10 and BH11 are isomorphic to the 
group $\langle R_1,J\rangle$ in the triangle group  
with braiding parameters $(3,3,3;2)$ and $p=3$;
that is the Deligne-Mostow group $\Gamma(3,5/6)$. Specifically:
\begin{enumerate}
\item[(1)] The group BH9 is $\Gamma(3,5/6)$ 
with generators by $A=R_1J^{-1}$ and $B=R_1^{-1}J^{-1}$.
\item[(2)] The group BH10 is $\Gamma(3,5/6)$
with generators by $A=J^{-1}$ and $B=R_1J^{-1}$.
\item[(3)] The group BH11 is $\Gamma(3,5/6)$ 
with generators $A=J^{-1}$ and $B=R_1^{-1}J^{-1}$.
\end{enumerate}
\end{proposition}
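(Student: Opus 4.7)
The plan is to mimic the strategy already carried out in Propositions~\ref{prop-compare-bh2-4} and \ref{prop-compare-bh-5-6}: for each of the three generator pairs $(A,B)$ in (1), (2), (3) we verify Levelt's criterion, show that $\langle A,B\rangle=\langle R_1,J\rangle$, and then identify the eigenvalues of $A$ and $B$ with those of BH9, BH10 or BH11 up to a common scalar shift. Throughout, we use the specific data for the $(3,3,3;2)$ triangle group with $p=3$: from the table in Section~\ref{sec-triangle-groups} we have $\tau=-e^{-\pi i/3}=\omega$, hence $\rho=\sigma=\tau=\omega$; and the complex reflection $R_1$, whose eigenvalues are $\omega,1,1$, satisfies $R_1^3=I$.

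Levelt's criterion is immediate in cases (2) and (3), where $BA^{-1}=R_1$ and $BA^{-1}=R_1^{-1}$ respectively. In case (1) a short calculation gives $BA^{-1}=R_1^{-1}J^{-1}\cdot JR_1^{-1}=R_1^{-2}$, which equals $R_1$ by $R_1^3=I$ and so is a complex reflection. The equalities $\langle A,B\rangle=\langle R_1,J\rangle$ in (2) and (3) are clear, since $A=J^{-1}$ and $BA^{-1}\in\{R_1^{\pm 1}\}$. In (1) we note that $AB^{-1}=R_1J^{-1}\cdot JR_1=R_1^2=R_1^{-1}$, so $R_1\in\langle A,B\rangle$ and then $J^{-1}=R_1^{-1}A\in\langle A,B\rangle$ as well.

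For the parameter matching, the characteristic polynomial of $J$ is read directly off \eqref{eq-AB-3fold} with $p=3$, namely $t^3-\bar\omega$. Hence the eigenvalues of $J^{-1}$ correspond to the triple $\{1/9,4/9,7/9\}$, which are the $\alpha$-parameters of BH10 and BH11; this handles $A$ in cases (2) and (3). In case (1) we instead compute the characteristic polynomial of $A=R_1J^{-1}$ directly as a $3\times 3$ determinant and factor it; the resulting eigenvalues agree with the $\alpha$'s of BH9 after a common scalar shift. For each $B$ we then compute ${\rm tr}(B)$ and ${\rm det}(B)={\rm det}(BA^{-1}){\rm det}(A)$, factor the characteristic polynomial, and verify that the eigenvalues match the $\beta$-parameters of BH9, BH10 or BH11 under the same scalar shift fixed by the $\alpha$'s. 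The main work is the three matrix calculations $R_1^{\pm 1}J^{-1}$ and keeping track of the scalar shift; there is no conceptual obstacle.
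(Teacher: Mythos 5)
Your proposal is correct and follows essentially the same route as the paper: identify $BA^{-1}$ (or $AB^{-1}$) with $R_1^{\pm1}$ to get Levelt's criterion, note that $R_1$ and hence $J^{-1}$ lie in $\langle A,B\rangle$, and then compute traces/determinants (equivalently characteristic polynomials) of the three products $J^{-1}$, $R_1J^{-1}$, $R_1^{-1}J^{-1}$ to match the BH9--BH11 parameters up to a common scalar shift. The only difference is that you spell out the generation step and the Levelt check slightly more explicitly than the paper does, which is harmless.
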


\begin{proof}
The group $\Gamma(3,5/6)$ has $p=3$ and 
$\tau=-e^{-\pi i/3}=e^{2\pi i/3}=e^{2\pi i/3}+e^{i\pi/6}+e^{7\pi i/6}$. 
\begin{enumerate}
\item[(1)] We take $A=R_1J^{-1}$ and $B=R_1^{-1}J^{-1}$. Note that
$AB^{-1}=(R_1J^{-1})(JR_1)=R_1^2=R_1^{-1}$ since $R_1$ has order 3.
We have 
$$
\begin{array}{ll}
{\rm det}(A)=e^{4\pi i/3},\quad & {\rm tr}(A)=-e^{2\pi i/3}\bar\tau=-1,\\
{\rm det}(B)=1, \quad & {\rm tr}(R_1^{-1}J^{-1})=\bar{\tau}=-e^{i\pi/3}.
\end{array}
$$ 
Therefore the parameters
for this group are $1/3$, $1/2$, $5/6$; $5/12$, $2/3$, $11/12$. These differ
from the parameters for BH9 by a scalar shift of $1/3$.
\item[(2)] We take $A=J^{-1}$ and $B=R_1J^{-1}$. Note that $AB^{-1}=R_1^{-1}$.
We have 
$$
\begin{array}{ll}
{\rm det}(J^{-1})=e^{2\pi i/3}, \quad &{\rm tr}(J^{-1})=0, \\
{\rm det}(R_1J^{-1})=e^{4\pi i/3}, \quad &
{\rm tr}(R_1J^{-1})=-e^{2\pi i/3}\bar\tau=-1.
\end{array}
$$
Therefore, the parameters of this group are 
$1/9$, $4/9$, $7/9$; $1/3$, $1/2$, $5/6$. These differ from the parameters
for the group BH10 by a scalar shift by $2/3$.
\item[(3)] We take $A=J^{-1}$ and $B=R_1^{-1}J^{-1}$. 
Arguing as above, we see the parameters
of this group are $1/9$, $4/9$, $7/9$; $5/12$, $2/3$, $11/12$. These differ
from the parameters of BH11 by a scalar shift of $1/3$.
\end{enumerate}
\end{proof}

\medskip

Finally, we compare the Hermitian forms $D$ from
Theorem~\ref{thm-bh} and $H$ from \eqref{eq-H}. From general theory
(for example the last part of Theorem~\ref{thm-bh}) we know they have 
the same signature, but it is instructive to make this explicit.  We have:
$$
H=\left(\begin{matrix} 
2\sin(\pi/p) & -ie^{-i\pi/p}\tau & ie^{-i\pi/p}\bar\tau \\
ie^{i\pi/p}\bar\tau & 2\sin(\pi/p) & -ie^{-i\pi/p}\tau \\
-ie^{i\pi/p}\tau & ie^{i\pi/p}\bar\tau & 2\sin(\pi/p)
\end{matrix}\right).
$$
Write $\omega=e^{2\pi i/3}$. The eigenvalues of $J$ are
$a_1=\omega e^{-2\pi i/3p}$, $a_2=\bar \omega e^{-2\pi i/3p}$ and 
$a_3=e^{-2\pi i/3p}$. A matrix $V$ whose columns are eigenvectors for $J$ is:
$$
V=\left(\begin{matrix}
\omega e^{-2\pi i/3p}/3 & \bar\omega e^{-2\pi i/3p}/3 & e^{-2\pi i/3p}/3 \\
1/3 & 1/3 & 1/3 \\
\bar\omega e^{2\pi i/3p}/3 & \omega e^{2\pi i/3p}/3 & e^{-2\pi i/3p}/3 
\end{matrix}\right).
$$
Then a short calculation shows that $V^*HV=D={\rm diag}(d_1,\,d_2,\,d_3)$ where
\begin{eqnarray*}
d_1 & = & \bigl(2\sin(\pi/p)-i\bar\omega e^{-i\pi/3p}\tau
+i\omega e^{i\pi/3p}\bar\tau\bigr)/3, \\
d_2 & = & \bigl(2\sin(\pi/p)-i\omega e^{-i\pi/3p}\tau
+i\bar\omega e^{i\pi/3p}\bar\tau\bigr)/3, \\
d_3 & = & \bigl(2\sin(\pi/p)-ie^{-i\pi/3p}\tau+i\bar\tau\bigr)/3.
\end{eqnarray*}
This agrees with the value of $d_j$ found in Theorem~\ref{thm-bh},
namely for $\{j,k,\ell\}=\{1,2,3\}$
$$
d_j=\frac{ie^{-i\pi/p}(a_j^3-\tau a_j^2+\bar\tau a_j-1)}
{a_j(a_k-a_j)(a_\ell-a_j)}.
$$

\subsection{Lattices with two-fold symmetry}

We suppose we have braid relations
$$
{\rm br}(R_2,R_3)={\rm br}(R_3,R_1)=n,\quad 
{\rm br}(R_1,R_2)={\rm br}(R_1,R_3^{-1}R_2R_3)=m.
$$
These groups were studied by Thompson \cite{Thompson} and by
Parker and Sun \cite{PS}. Following \cite{PS}, we choose the normalisation 
$\sigma=\tau=\sqrt{\rho+\overline{\rho}}=2\cos(\pi/n)$ 
and $|\rho|=2\cos(\pi/m)$. There is a symmetry $Q$ with
$$
\begin{array}{ll}
QR_1Q^{-1}=R_1R_2R_1^{-1},\quad &
QR_2Q^{-1}=R_1R_3R_1R_3^{-1}R_1^{-1},\\
QR_3Q^{-1}=R_1R_3R_1^{-1},\quad &
Q^2=R_1R_2R_3.
\end{array}
$$
The values of $\rho$ giving lattices are:
$$
\begin{array}{|lll|lll|}
\hline
n & m & \rho & n & m & \rho \\
\hline
3 & 3 & e^{i\pi /3} & 3 & 4 & (1+i\sqrt{7})/2 \\
3 & 5 & -e^{2\pi i/5}-e^{4i\pi/5} & 4 & 3 & 1 \\
4 & 4 & 1+i & 5 & 4 & e^{i\pi/3}(\sqrt{5}-i\sqrt{3})/2 \\ 
5 & 5 & e^{2\pi i/5}+1 & & & \\
\hline
\end{array}
$$
Note that when $n=m$ we recover groups that also fall into the $(n,n,n;m)$
category, but we obtain a different set of generators.

\begin{lemma}
The group $\langle R_1,R_2,R_3,Q\rangle$ is generated by $R_1$ and $Q$.
\end{lemma}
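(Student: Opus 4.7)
The plan is to show that both $R_2$ and $R_3$ can be expressed as words in $R_1$ and $Q$, using the four symmetry relations listed just above the lemma. Since the group $\langle R_1,R_2,R_3,Q\rangle$ is generated by the four elements $R_1,R_2,R_3,Q$, this suffices.

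First I would tackle $R_2$. The first relation $QR_1Q^{-1}=R_1R_2R_1^{-1}$ can be rearranged directly to give
\[
R_2 \;=\; R_1^{-1}\bigl(QR_1Q^{-1}\bigr)R_1,
\]
which is manifestly a word in $R_1$ and $Q$. At this point $R_2\in\langle R_1,Q\rangle$.

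Next I would use the fourth relation $Q^2=R_1R_2R_3$ to solve for $R_3$. Rearranging,
\[
R_3 \;=\; R_2^{-1}R_1^{-1}Q^2,
\]
and substituting the expression for $R_2$ obtained in the previous step shows that $R_3$ is likewise a word in $R_1$ and $Q$. Thus $\langle R_1,R_2,R_3,Q\rangle\subseteq\langle R_1,Q\rangle$, and the reverse inclusion is trivial.

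There is essentially no obstacle here: the second and third relations listed above are not even needed, because the first and fourth relations alone are enough to recover $R_2$ and $R_3$ from $R_1$ and $Q$. (One could alternatively use the third relation $QR_3Q^{-1}=R_1R_3R_1^{-1}$, which gives $R_1^{-1}QR_3=R_3R_1^{-1}Q$; but this is an equation in $R_3$ rather than a definition of it, and would require additional work, so the approach via $Q^2=R_1R_2R_3$ is cleaner.)
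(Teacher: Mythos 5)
Your proof is correct and follows essentially the same route as the paper: the paper also recovers $R_2=R_1^{-1}QR_1Q^{-1}R_1$ from the first symmetry relation and $R_3=R_2^{-1}R_1^{-1}Q^2$ from $Q^2=R_1R_2R_3$ (only noting in addition that this simplifies to $R_3=(R_1^{-1}Q)^2$). No gaps.
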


\begin{proof}
We have
$$
R_2=R_1^{-1}QR_1Q^{-1}R_1,\quad R_3=R_2^{-1}R_1^{-1}Q^2 =(R_1^{-1}Q)^2.
$$
\end{proof}

\medskip

In this case generators are given by:
$$
R_1^{-1}Q=\left(\begin{matrix}
0 & e^{-2\pi i/p} & 0 \\ 1 & 0 & 0 \\ 0 & \sqrt{\rho+\bar\rho} & -1
\end{matrix}\right),\quad
Q=\left(\begin{matrix} \rho & 1-\rho-\bar\rho & \sqrt{\rho+\bar\rho} \\
1 & 0 & 0 \\ 0 & \sqrt{\rho+\bar\rho} & -1 \end{matrix}\right).
$$ 
Conjugating by 
$$
C=\left(\begin{matrix}
1 & 1 & 0 \\ 0 & 1 & 1 \\ 0 & 0 & \sqrt{\rho+\overline{\rho}} 
\end{matrix}\right)
$$
gives $A=C^{-1}(R_1^{-1}Q)C$ and $B=C^{-1}QC$ in the forms
\eqref{eq-A-general} and \eqref{eq-B-general}:
\begin{equation}\label{eq-AB-2fold}
A%=C^{-1}(R_1^{-1}Q)C
=\left(\begin{matrix}
-1 & e^{-2\pi i/p} & e^{-2\pi i/p} \\ 1 & 0 & 0 \\ 0 & 1 & 0 
\end{matrix}\right),\quad
B%=C^{-1}QC
=\left(\begin{matrix} 
\rho-1 & 1-\overline{\rho} & 1 \\ 1 & 0 & 0 \\
0 & 1 & 0 \end{matrix}\right).
\end{equation}
Note that $BA^{-1}=C^{-1}R_1C$ is a complex reflection. Therefore, we have:

\begin{proposition}\label{prop-levelt-2fold}
The matrices $A$ and $B$ given \eqref{eq-AB-2fold} satisfy Levelt's 
criterion and so $\langle R_1,R_2,R_3,Q\rangle$ is a hypergeometric group.
\end{proposition}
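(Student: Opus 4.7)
The plan is to verify directly the hypotheses of Levelt's criterion (Theorem~\ref{thm-levelt}) for the matrices $A$ and $B$ in \eqref{eq-AB-2fold}, and then to identify the resulting hypergeometric group with $\langle R_1, R_2, R_3, Q\rangle$ using the Lemma just proved. There are essentially three things to check: that $A$ and $B$ sit in the normal form \eqref{eq-AB-general}, that $BA^{-1}$ is a complex reflection, and that the eigenvalues of $A$ are disjoint from those of $B$.

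The first condition is immediate by inspection of \eqref{eq-AB-2fold}. Reading the coefficients off the first rows, one finds
$$
\chi_A(t) = t^3 + t^2 - e^{-2\pi i/p}t - e^{-2\pi i/p} = (t+1)\bigl(t^2 - e^{-2\pi i/p}\bigr),
$$
so $A$ has eigenvalues $-1$ and $\pm e^{-\pi i/p}$, while
$$
\chi_B(t) = t^3 + (1-\rho)t^2 + (\bar\rho - 1)t - 1
$$
has roots on the unit circle, since $B$ is conjugate to the Hermitian-form isometry $Q$. The complex-reflection condition is automatic from the companion shape, as observed in the paragraph following \eqref{eq-aj-Baj}; alternatively, using the conjugacy $A = C^{-1}(R_1^{-1}Q)C$, $B = C^{-1}QC$ displayed just before the proposition, a direct computation gives
$$
BA^{-1} = C^{-1} Q (R_1^{-1}Q)^{-1} C = C^{-1} R_1 C,
$$
which is manifestly a complex reflection, being a conjugate of $R_1$.

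This reduces the proof to the eigenvalue-disjointness $a_j \neq b_k$. This is a finite verification across the seven $(n,m)$ families tabulated in the $\rho$-table above. In each family one substitutes the listed value of $\rho$ into $\chi_B$, computes the three roots, and compares with $\{-1, +e^{-\pi i/p}, -e^{-\pi i/p}\}$ for each of the discrete values of $p$ under consideration. I expect this to be entirely routine, but it is the only place where something could conceivably fail, so it is the main (mild) obstacle of the proof; should a coincidence occur at some $p$, one would have to argue using the more delicate general characterisation in Beukers--Heckman rather than Levelt's cleaner statement.

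With the hypotheses verified, Theorem~\ref{thm-levelt} asserts that $\langle A, B\rangle$ is a hypergeometric group. Since $A$ and $B$ are conjugate via $C$ to $R_1^{-1}Q$ and $Q$, we have $\langle A, B\rangle = C^{-1}\langle R_1^{-1}Q, Q\rangle C = C^{-1}\langle R_1, Q\rangle C$, and the preceding Lemma identifies $\langle R_1, Q\rangle$ with $\langle R_1, R_2, R_3, Q\rangle$. Hence this reflection group is conjugate to the hypergeometric group generated by $A$ and $B$, as claimed. The associated parameters $\alpha_j, \beta_j$ can then be read off from $\chi_A$ and $\chi_B$ for each family, in direct analogy with the table following Proposition~\ref{prop-levelt-3fold}.
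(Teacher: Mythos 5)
Your argument is essentially the paper's: it likewise passes to the conjugates $A=C^{-1}(R_1^{-1}Q)C$, $B=C^{-1}QC$, observes that they are in the normal form \eqref{eq-AB-general} with $BA^{-1}=C^{-1}R_1C$ a complex reflection, and uses the preceding lemma to identify $\langle A,B\rangle$ with (a conjugate of) $\langle R_1,R_2,R_3,Q\rangle$. Your added point about verifying $a_j\neq b_k$ is in fact more careful than the paper, which omits it; carrying out that finite check, the condition holds for every hyperbolic value of $p$ in the table and fails only in the Euclidean cases $(n,m;p)=(4,3;3)$ and $(4,4;2)$, which play no role in the main theorem.
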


Note that ${\rm det}(B)=e^{-2\pi i/p}$, ${\rm tr}(B)=-1$, 
${\rm det}(A)=1$ and ${\rm tr}(A)=\rho-1$. From this we can find the 
eigenvalues $a_j=e^{2\pi i\alpha_j}$ and $b_j=e^{2\pi i\beta_j}$ of $A$ and $B$. 
Up to a scalar shift, the parameters $\alpha_j$ and $\beta_j$ are given by:
$$
\begin{array}{|ll|lll|lll|}
\hline
n & m & \alpha_1 & \alpha_2 & \alpha_3 & \beta_1 & \beta_2 & \beta_3 \\
\hline
3 & 3 & 1/2-1/2p & 1/2 & 1-1/2p & 1/12 & 1/3 & 7/12 \\
3 & 4 & 1/2-1/2p & 1/2 & 1-1/2p & 1/7 & 2/7 & 4/7 \\
3 & 5 & 1/2-1/2p & 1/2 & 1-1/2p & 1/15 & 1/5 & 11/15 \\
4 & 3 & 1/2-1/2p & 1/2 & 1-1/2p & 0 & 1/3 & 2/3 \\
4 & 4 & 1/2-1/2p & 1/2 & 1-1/2p & 1/8 & 1/4 & 5/8 \\
5 & 4 & 1/2-1/2p & 1/2 & 1-1/2p & 1/15 & 4/15 & 2/3 \\
5 & 5 & 1/2-1/2p & 1/2 & 1-1/2p & 3/20 & 1/5 & 13/20 \\
\hline
\end{array}
$$

Comparing these parameters with those from Beukers and Heckman, we obtain:

\begin{proposition}\label{prop-compare-bh3}
The group BH3 is isomorphic to the group $\langle R_1,Q\rangle$
in the triangle group with braiding parameters $(3,3,4;4)$ and $p=2$, 
that is the group ${\mathcal T}(2,{\bf S}_1)$, and generators
$A=Q^{-1}$ and $B=R_1Q^{-1}$.
\end{proposition}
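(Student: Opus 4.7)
The plan is to follow the template of Proposition \ref{prop-compare-bh2-4}(2). The group ${\mathcal T}(2,{\bf S}_1)$ has braiding parameters $(3,3,4;4)$ with $p=2$, so in the two-fold symmetry normalisation we have $\rho=(1+i\sqrt{7})/2$ (with $\rho+\bar\rho=1$) and $\sigma=\tau=1$. Setting $A=Q^{-1}$ and $B=R_1Q^{-1}$, I first observe that $BA^{-1}=R_1Q^{-1}\cdot Q=R_1$ is by construction a complex reflection, so once the eigenvalues of $A$ and $B$ are identified, Theorem \ref{thm-levelt} applies automatically.

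For the eigenvalues of $A=Q^{-1}$ I would read off the eigenvalues of $Q$ from the two-fold symmetry table in Section \ref{sec-triangle-groups}, namely $e^{2\pi i/7}$, $e^{4\pi i/7}$, $e^{8\pi i/7}$, and invert them to obtain $e^{6\pi i/7}$, $e^{10\pi i/7}$, $e^{12\pi i/7}$, with angle parameters $3/7,\,5/7,\,6/7$. For $B=R_1Q^{-1}$ I would compute directly: $\det(B)=\det(R_1)\det(Q^{-1})=(-1)(1)=-1$ and ${\rm tr}(B)=-1$. To pin down the middle coefficient of the characteristic polynomial, I would calculate ${\rm tr}(B^2)$ by writing out the diagonal entries of $B^2$, expecting the answer $-1$; this forces $\chi_B(t)=t^3+t^2+t+1=(t+1)(t^2+1)$ and hence eigenvalues $\{-1,\,i,\,-i\}$ with angle parameters $\{1/2,\,1/4,\,3/4\}$.

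A scalar shift by $d=-1$ then multiplies each eigenvalue by $-1$, shifting every angle parameter by $1/2 \pmod{1}$. This sends the $\alpha$-parameters to $\{13/14,\,3/14,\,5/14\}$ and the $\beta$-parameters to $\{0,\,3/4,\,1/4\}$, which are precisely the parameters listed for BH3 in the table of Beukers and Heckman. To conclude, I would note that $Q=A^{-1}$ and $R_1=BA^{-1}$, so $\langle A,B\rangle=\langle R_1,Q\rangle={\mathcal T}(2,{\bf S}_1)$ by the lemma preceding \eqref{eq-AB-2fold}.

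The main obstacle is the explicit computation of ${\rm tr}(B^2)$: unlike in Propositions \ref{prop-compare-bh2-4} and \ref{prop-compare-bh-5-6}, the generator $B$ here is neither a power of a single element of known order nor a matrix covered by the generic two-fold table, so its middle elementary symmetric function cannot be read off but has to be computed by actually multiplying the relevant $3\times 3$ matrices. Once that number is in hand, the remaining verification is routine bookkeeping with the scalar shift.
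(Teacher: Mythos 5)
Your proposal is correct and takes essentially the same route as the paper: compute ${\rm det}$ and ${\rm tr}$ of $A=Q^{-1}$ and $B=R_1Q^{-1}$ (your values ${\rm det}(B)=-1$, ${\rm tr}(B)=-1$, ${\rm tr}(B^2)=-1$ are right, giving eigenvalues $-1,\pm i$), obtain the angle parameters $3/7,\,5/7,\,6/7$; $1/4,\,1/2,\,3/4$, and match BH3 after a scalar shift by $1/2$. The only cosmetic difference is that you fix the middle coefficient of $\chi_B$ via ${\rm tr}(B^2)$, where the paper leaves this implicit (it also follows from unitarity, since ${\rm tr}(B^{-1})=\overline{{\rm tr}(B)}$ gives the second symmetric function as ${\rm det}(B)\,\overline{{\rm tr}(B)}=1$).
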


\begin{proof}
We have $p=2$ and $\rho-1=(-1+i\sqrt{7})/2=e^{2\pi i/7}+e^{4\pi i/7}+e^{8\pi i/7}$.
Therefore
$$
\begin{array}{ll}
{\rm det}(Q^{-1})=1,\quad & {\rm tr}(Q^{-1})=\bar\rho-1=(-1-i\sqrt{7})/2, \\
{\rm det}(R_1Q^{-1})=-1,\quad & {\rm tr}(R_1Q^{-1})=-1.
\end{array}
$$
Therefore the parameters of this group are $\alpha_1=3/7$, 
$\alpha_2=5/7$, $\alpha_3=6/7$, $\beta_1=1/4$, $\beta_2=1/2$ and 
$\beta_3=3/4$. These differ from the parameters for BH3 by a scalar shift by
$1/2$. 
\end{proof}

\medskip

We remark that Proposition~7.1~(1) of \cite{DPP} says that 
${\mathcal T}(p,{\bf S}_1)$ is isomorphic to ${\mathcal S}(p,\bar\sigma_4)$.
That is, for the same value of $p$ the groups with braiding parameters
$(3,3,4;4)$ and $(4,4,4;3)$ are isomorphic. So this result indicates
BH3 is isomorphic to BH2 and BH4; see \cite{BH}.

\begin{proposition}\label{prop-compare-bh12}
The group BH12 is isomorphic to the group $\langle R_1,Q\rangle$
in the triangle group with braiding parameters $(3,3,3;3)$ and $p=2$, 
that is the Deligne-Mostow group $\Gamma(2,2/3)$, and generators
$A=Q$ and $B=R_1^{-1}Q$.
\end{proposition}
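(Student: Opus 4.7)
The plan is to follow the same template as Propositions~\ref{prop-compare-bh3} and \ref{prop-bh-9-10-11}: compute the characteristic polynomials of $A = Q$ and $B = R_1^{-1}Q$, read off the eigenvalues, match them to the BH12 parameters up to a scalar shift (and reordering), and then verify that $\langle A,B\rangle = \langle R_1, Q\rangle$.

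Specialising the two-fold symmetric setup to $n = m = 3$ and $p = 2$ gives $\rho = e^{i\pi/3}$, so that $\rho - 1 = e^{2\pi i/3}$, $\bar\rho - 1 = e^{-2\pi i/3}$, and $e^{-2\pi i/p} = -1$. Because $A$ and $B$ here are conjugate (via $C$) to the matrices denoted $B$ and $A$ respectively in \eqref{eq-AB-2fold}, their characteristic polynomials can be read straight off the companion form. I expect
\[
\chi_A(t) = t^3 + e^{-i\pi/3}t^2 + e^{-2\pi i/3}t - 1 = (t - e^{i\pi/6})(t - e^{7i\pi/6})(t - e^{2\pi i/3}),
\]
\[
\chi_B(t) = t^3 + t^2 + t + 1 = (t+1)(t-i)(t+i).
\]
Hence the numerator and denominator parameters of $\langle A,B\rangle$ are $\{\alpha_j\} = \{1/12,\,7/12,\,1/3\}$ and $\{\beta_j\} = \{1/4,\,1/2,\,3/4\}$.

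A scalar shift by $d = -1$, equivalently adding $1/2$ to every parameter modulo $1$, sends these sets to $\{7/12,\,1/12,\,5/6\}$ and $\{3/4,\,0,\,1/4\}$, which agree up to permutation with the BH12 values $(1/12,\,7/12,\,5/6)$ and $(0,\,1/4,\,3/4)$. Levelt's criterion is built into the construction of \eqref{eq-AB-2fold} and is visible here from the identity $BA^{-1} = R_1^{-1}Q\cdot Q^{-1} = R_1^{-1}$, which is a complex reflection; Theorem~\ref{thm-levelt} then identifies $\langle A,B\rangle$ with a scalar shift of BH12. Finally, $Q = A$ and $R_1 = (BA^{-1})^{-1}$ both lie in $\langle A,B\rangle$, so $\langle A,B\rangle = \langle R_1, Q\rangle$. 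No step is a serious obstacle; the only care required is to pick the correct scalar shift and the correct ordering of eigenvalues so that the parameters line up with those in the Beukers-Heckman table.
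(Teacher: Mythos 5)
Your proposal is correct and follows essentially the same route as the paper: compute the characteristic polynomials (equivalently traces and determinants) of $Q$ and $R_1^{-1}Q$ for $\rho=e^{i\pi/3}$, $p=2$, read off the parameters $\{1/12,1/3,7/12\}$ and $\{1/4,1/2,3/4\}$, and match them to BH12 after a scalar shift by $1/2$. The extra details you supply (the explicit factorisations, the observation $BA^{-1}=R_1^{-1}$, and the check that $\langle A,B\rangle=\langle R_1,Q\rangle$) are all consistent with, and slightly more explicit than, the paper's brief argument.
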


\begin{proof}
We see that the parameters of $\Gamma(2,2/3)$ with these generators are
$\alpha_1=1/12$, $\alpha_2=1/3$, $\alpha_3=7/12$, $\beta_1=1/4$,
$\beta_2=1/2$ and $\beta_3=3/4$. These differ from the parameters of BH12
by a scalar shift of $1/2$.
\end{proof}

\medskip

Finally, we compare the Hermitian forms $H$ and $D$ using the
same method as in the previous section. We have
$$
H=\left(\begin{matrix} 
2\sin(\pi/p) & -ie^{-i\pi/p}\rho & ie^{-i\pi/p}\sqrt{\rho+\bar\rho} \\
ie^{i\pi/p}\bar\rho & 2\sin(\pi/p) & -ie^{-i\pi/p}\sqrt{\rho+\bar\rho} \\
-ie^{i\pi/p}\sqrt{\rho+\bar\rho} & ie^{i\pi/p}\sqrt{\rho+\bar\rho} & 2\sin(\pi/p)
\end{matrix}\right).
$$
The eigenvalues of
$R_1^{-1}Q$ are $a_1=-e^{-i\pi/p}$, $a_2=-1$ and $a_3=e^{-i\pi/p}$ and a matrix
$V$ whose columns are eigenvectors for $R_1^{-1}Q$ are
$$
V=\left(\begin{matrix}
-e^{-i\pi/p}/2 & 0 & e^{-i\pi/p}/2 \\
1/2 & 0 & 1/2 \\
\sqrt{\rho+\bar\rho}/2(1-e^{-i\pi/p}) & \sqrt{\rho+\bar\rho}/2\sin(\pi/p) &
\sqrt{\rho+\bar\rho}/2(1+e^{-i\pi/p})
\end{matrix}\right)
$$
Then we calculate $V^*HV=D={\rm diag}(d_1,\,d_2,\,d_3)$ where
\begin{eqnarray*}
d_1 & = & \frac{4\sin^2(\pi/p)-(1+e^{-i\pi/p})\rho-(1+e^{i\pi/p})\bar\rho}
{4\sin(\pi/p)}, \\
d_2 & = & \frac{\rho+\bar\rho}{2\sin(\pi/p)}, \\
d_3 & = & \frac{4\sin^2(\pi/p)-(1-e^{-i\pi/p})\rho-(1-e^{i\pi/p})\bar\rho}
{4\sin(\pi/p)}.
\end{eqnarray*}
A simple calculation shows that $d_j$ is the value given in 
Theorem~\ref{thm-bh}, namely for $\{j,k,\ell\}=\{1,2,3\}$
$$
d_j=\frac{ie^{-i\pi/p}\bigl(a_j^3-(\rho-1)a_j^2+(\bar\rho-1)a_j-1\bigr)}
{a_j(a_k-a_j)(a_\ell-a_j)}.
$$

\subsection{$(3,3,4;n)$ triangle groups}

In this case we suppose that 
$$
{\rm br}(R_1,R_2)=4, \quad
{\rm br}(R_2,R_3)={\rm br}(R_1,R_3)=3,\quad
{\rm br}(R_1,R_3^{-1}R_2R_3)=n.
$$ 
This means $|\sigma|=|\tau|=1$ and $|\rho|=\sqrt{2}$. Since we are free to 
choose the arguments of two of these parameters, we take $\sigma=\tau=1$;
see Thompson \cite{Thompson}. 
Therefore, $|\rho\sigma-\bar{\tau}|=|\rho-1|=2\cos(\pi/n)$. 
The values of $\rho$ corresponding to lattices are
$$
\begin{array}{|ll|ll|}
\hline 
n & \rho & n & \rho \\
\hline
3 & 1+i &
4 & (1+i\sqrt{7})/2 \\ 
5 & e^{2\pi i/3}(\sqrt{5}-i\sqrt{3})/2 &
6 & i\sqrt{2} \\ 
7 & e^{6\pi i/7}(-1+i\sqrt{7})/2 &
& \\
\hline
\end{array}
$$

\begin{proposition}
Suppose that ${\rm br}(R_1,R_2)=4$ and 
${\rm br}(R_2,R_3)={\rm br}(R_1,R_3)=3$ 
then $\langle R_1,R_2,R_3\rangle$
is generated by $R_2$ and $R_1R_2R_3$.
\end{proposition}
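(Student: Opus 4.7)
Let $K = \langle R_2, S\rangle$ with $S = R_1 R_2 R_3$. The plan is to show $R_1 \in K$; once this is established, $R_3 = R_2^{-1} R_1^{-1} S$ also lies in $K$, so $\langle R_1, R_2, R_3\rangle \subseteq K$ and the reverse containment is immediate. Thus everything reduces to producing $R_1$ from $R_2$ and $S$ by a concrete word.

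The first step is a short computation of $S R_2 S^{-1}$. The braid relation $R_2 R_3 R_2 = R_3 R_2 R_3$ is equivalent to $R_2 R_3 R_2 R_3^{-1} R_2^{-1} = R_3$, so
$$
S R_2 S^{-1} = R_1(R_2 R_3 R_2 R_3^{-1} R_2^{-1}) R_1^{-1} = R_1 R_3 R_1^{-1}.
$$
This element belongs to $K$. The next step is to iterate: compute $S^2 R_2 S^{-2} = S(R_1 R_3 R_1^{-1}) S^{-1}$ by splitting as $(SR_1 S^{-1})(SR_3 S^{-1})(SR_1^{-1} S^{-1})$ and simplifying each factor with the braid-3 identities $R_3 R_1 R_3^{-1} = R_1^{-1} R_3 R_1$ and $R_2 R_3 R_2^{-1} = R_3^{-1} R_2 R_3$, together with $R_3 R_1 R_3 = R_1 R_3 R_1$. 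After systematic cancellation the result collapses to
$$
S^2 R_2 S^{-2} = R_1 R_2 R_1 R_2^{-1} R_1^{-1},
$$
which is the key intermediate identity. Up to this point only the two braid-3 relations have been used.

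The final step uses the braid-4 relation ${\rm br}_4(R_1, R_2)$, i.e.\ $R_2 R_1 R_2 R_1 = R_1 R_2 R_1 R_2$. Conjugating the previous element by $R_2$ gives
$$
R_2(S^2 R_2 S^{-2}) R_2^{-1} = (R_2 R_1 R_2 R_1) R_2^{-1} R_1^{-1} R_2^{-1} = (R_1 R_2 R_1 R_2) R_2^{-1} R_1^{-1} R_2^{-1} = R_1,
$$
so $R_1 \in K$. Then $R_3 = R_2^{-1} R_1^{-1} S \in K$, completing the proof.

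The three braid relations each play a distinct role: ${\rm br}_3(R_2, R_3)$ accomplishes the first conjugation; ${\rm br}_3(R_1, R_3)$ together with ${\rm br}_3(R_2, R_3)$ drives the simplification of $S^2 R_2 S^{-2}$; and the braid-4 relation ${\rm br}_4(R_1, R_2)$ is essential for the final telescoping to $R_1$. The main potential obstacle is carrying out the cancellations in the second step cleanly — a priori the expression for $S^2 R_2 S^{-2}$ is a long word in $R_1, R_2, R_3$, and the fact that it collapses all the way to $R_1 R_2 R_1 R_2^{-1} R_1^{-1}$ requires the two braid-3 relations to be applied in the right order; this is the one place where care is needed.
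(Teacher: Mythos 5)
Your proposal is correct and follows essentially the same route as the paper: both establish the identity $R_2(R_1R_2R_3)^2R_2(R_1R_2R_3)^{-2}R_2^{-1}=R_1$ using ${\rm br}_3(R_2,R_3)$, then ${\rm br}_3(R_1,R_3)$, then ${\rm br}_4(R_1,R_2)$, and recover $R_3=R_2^{-1}R_1^{-1}(R_1R_2R_3)$; the paper simply simplifies the whole word in one pass, whereas you compute $SR_2S^{-1}=R_1R_3R_1^{-1}$ and $S^2R_2S^{-2}=R_1R_2R_1R_2^{-1}R_1^{-1}$ in stages (and that intermediate identity does check out).
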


\begin{proof}
Using ${\rm br}(R_2,R_3)=3$, then ${\rm br}(R_1,R_3)=3$ and then
${\rm br}(R_1,R_2)=4$, we have:
\begin{eqnarray*}
\lefteqn{R_2(R_1R_2R_3)^2R_2(R_1R_2R_3)^{-2}R_2^{-1}}\\
& = & 
R_2R_1R_2R_3R_1(R_2R_3R_2R_3^{-1}R_2^{-1})R_1^{-1}R_3^{-1}R_2^{-1}R_1^{-1}R_2^{-1} \\
& = & R_2R_1R_2(R_3R_1R_3R_1^{-1}R_3^{-1})R_2^{-1}R_1^{-1}R_2^{-1} \\
& = & R_2R_1R_2R_1R_2^{-1}R_1^{-1}R_2^{-1} \\
& = & R_1.
\end{eqnarray*}
Then $R_3=R_2^{-1}R_1^{-1}(R_1R_2R_3)$. That is,
$$
(R_1R_2R_3)^2R_2^{-1}(R_1R_2R_3)^{-2}R_2^{-1}(R_1R_2R_3)=R_3.
$$
Therefore $R_1$, $R_2$ and $R_3$ are all contained in 
$\langle R_2,\,(R_1R_2R_3)\rangle$.
\end{proof}

\medskip

This means that $\langle R_1,R_2,R_3\rangle$ is generated by 
$(R_1R_2R_3)$ and $(R_2^{-1}R_1R_2R_3)$. Moreover, 
$(R_1R_2R_3)(R_2^{-1}R_1R_2R_3)^{-1}=R_2$ which is a complex reflection.
We multiply both of these matrices by $e^{2\pi i/p}$, which is a scalar
shift.
\begin{eqnarray*}
e^{-2\pi i/p}R_2^{-1}R_1R_2R_3 & = & \left(\begin{matrix}
\rho-2 & 1 & \rho-1 \\
2-2\bar{\rho}-e^{-2\pi i/p}\bar\rho & \bar\rho+e^{-2\pi i/p} & 2-\bar\rho \\ 
1 & -1 & 1 \end{matrix}\right), \\
e^{-2\pi i/p}R_1R_2R_3 & = & \left(\begin{matrix}
\rho-2 & 1 & \rho-1 \\
1-\bar\rho & 0 & 1 \\
1 & -1 & 1 \end{matrix}\right).
\end{eqnarray*}

Conjugating by
$$
C=\left(\begin{matrix} 0 & 1 & -\rho \\ 1 & 1-\rho & -1 \\
0 & -1 & \rho-1 \end{matrix}\right)
$$
gives matrices in the form \eqref{eq-A-general} and \eqref{eq-B-general}:
\begin{eqnarray}
A & = & C^{-1}(e^{-2\pi i/p}R_2^{-1}R_1R_2R_3)C \notag \\
& = & \left(\begin{matrix}
\rho+\bar\rho-1+e^{-2\pi i/p} & 
-1+e^{-2\pi i/p}(1-\rho-\bar\rho) & e^{-2\pi i/3p} \\
1 & 0 & 0 \\ 0 & 1 & 0 \end{matrix}\right), 
\label{eq-A-334} \\
B & = & C^{-1}(e^{-2\pi i/p}R_1R_2R_3)C \notag \\
& = & \left(\begin{matrix}
\rho-1 & 1-\bar\rho & 1 \\ 1 & 0 & 0 \\ 0 & 1 & 0 \end{matrix}\right), 
\label{eq-B-334} \\
\end{eqnarray}
Observe that $BA^{-1}=C^{-1}R_2C$ is a complex reflection. Therefore, we have:

\begin{proposition}\label{prop-levelt-334}
The matrices $A$ and $B$ given by \eqref{eq-A-334} and \eqref{eq-B-334} 
satisfy Levelt's criterion and so $\langle R_1,R_2,R_3\rangle$ is a 
hypergeometric group.
\end{proposition}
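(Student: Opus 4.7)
The plan is to apply Levelt's criterion (Theorem~\ref{thm-levelt}) directly to the matrices $A$ and $B$ written down in \eqref{eq-A-334} and \eqref{eq-B-334}. First I would observe that both matrices are already in the normal form \eqref{eq-AB-general}: the top row of each encodes $-A_1,-A_2,-A_3$ and $-B_1,-B_2,-B_3$ respectively, so the characteristic polynomials $\chi_A(t)=t^3+A_1t^2+A_2t+A_3$ and $\chi_B(t)=t^3+B_1t^2+B_2t+B_3$, and hence the eigenvalues $a_j$ and $b_j$, can be read off immediately. The rank-one hypothesis on $BA^{-1}-I$ is also automatic: the identity $BA^{-1}=C^{-1}R_2C$ is recorded in the sentence immediately preceding the proposition, and conjugation preserves the property of being a complex reflection, so $BA^{-1}$ inherits this from $R_2$.

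Next I would verify the remaining hypothesis of Levelt's criterion, namely the disjointness $a_j\neq b_k$ for all $j,k$. From the first rows one computes $\det A=a_1a_2a_3=e^{-2\pi i/p}$ (this is also forced by multiplicativity of determinants, since each $R_j$ has determinant $e^{2\pi i/p}$) and $\det B=b_1b_2b_3=1$. Hence the two multisets $\{a_1,a_2,a_3\}$ and $\{b_1,b_2,b_3\}$ cannot coincide up to permutation, and it remains only to exclude a single isolated coincidence $a_j=b_k$. This reduces to showing $\mathrm{Res}(\chi_A,\chi_B)\neq 0$ for each of the five admissible values of $\rho$ from the preceding table together with each allowed value of $p$. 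This is a routine finite check and is the one place where any real computation is required; it is the main obstacle in the proof.

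Once Levelt's criterion applies, $\langle A,B\rangle$ is the hypergeometric group $H(a_1,a_2,a_3;b_1,b_2,b_3)$. To conclude, I would appeal to the preceding proposition, which writes $\langle R_1,R_2,R_3\rangle=\langle R_1R_2R_3,\,R_2^{-1}R_1R_2R_3\rangle$: by construction $A$ and $B$ are obtained from these two generators by a common scalar shift by $e^{-2\pi i/p}$ and conjugation by $C$, both of which preserve the class of hypergeometric groups (the scalar shift merely shifting the eigenvalue data uniformly, as noted in Theorem~\ref{thm-levelt}). Hence $\langle R_1,R_2,R_3\rangle$ is itself a hypergeometric group, as required.
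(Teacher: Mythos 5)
Your proposal is correct and follows essentially the same route as the paper: observe that $A$ and $B$ are already in the normal form \eqref{eq-AB-general}, that $BA^{-1}=C^{-1}R_2C$ is a complex reflection, and then pass back to $\langle R_1,R_2,R_3\rangle$ via the preceding generation proposition, the conjugation by $C$ and the scalar shift by $e^{-2\pi i/p}$. The only divergence is your handling of the condition $a_j\neq b_k$: the determinant comparison $\det A=e^{-2\pi i/p}$, $\det B=1$ does no real work there (one must exclude every coincidence $a_j=b_k$, not just equality of the full multisets), and no resultant computation is needed either — the eigenvalues are explicit roots of unity determined by $\operatorname{tr}$ and $\det$ (they are exactly the angle parameters tabulated right after the proposition for the five admissible $\rho$ and the finitely many $p$), so the disjointness is read off directly; you flag this check as the ``main obstacle'' but leave it undone, which is a minor incompleteness rather than a wrong step, and the paper itself treats it as implicit in the parameter table.
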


Therefore, we have ${\rm det}(B)=1$, ${\rm tr}(B)=\rho-1$,
${\rm det}(A)=e^{-2\pi i/p}$ and 
$$
{\rm tr}(A)=\rho+\overline{\rho}-1+e^{-2\pi i/p}
=2-|\rho-1|^2+e^{-2\pi i/p}=-2\cos(2\pi/n)+e^{-2\pi i/p}.
$$
From this we can calculate $a_j=e^{2\pi i\alpha_j}$ and
$b_j=e^{2\pi i\beta_j}$ the eigenvalues of $A$ and $B$.
We have:
$$
\begin{array}{|l|lll|lll|}
\hline
n & \alpha_1 & \alpha_2 & \alpha_3 & \beta_1 & \beta_2 & \beta_3 \\
\hline
3 &  1/2-1/n & 1/2+1/n & 1-1/p & 1/8 & 1/4 & 5/8 \\
4 & 1/2-1/n & 1/2+1/n & 1-1/p & 1/7 & 2/7 & 4/7 \\
5 & 1/2-1/n & 1/2+1/n & 1-1/p & 2/15 & 1/3 & 8/15 \\
6 & 1/2-1/n & 1/2+1/n  & 1-1/p & 1/8 & 3/8 & 1/2 \\
7 & 1/2-1/n & 1/2+1/n & 1-1/p & 23/42 & 4/7 & 37/42 \\
\hline
\end{array}
$$

\begin{proposition}\label{prop-compare-bh-7-8}
The groups BH7 and BH8 are isomorphic to the group with braiding 
parameters $(3,3,4;5)$ and $p=2$; that is ${\mathcal T}(2,{\bf S}_2)$.
Specifically:
\begin{enumerate}
\item[(1)] The group BH7 is ${\mathcal T}(2,{\bf S}_2)$ with generators
$A=R_3R_2R_1$ and $B=R_2R_3R_2R_1$.
\item[(2)] The group BH8 is ${\mathcal T}(2,{\bf S}_2)$ with generators
$A=R_3R_2R_1$ and $B=R_2R_1$.
\end{enumerate}
\end{proposition}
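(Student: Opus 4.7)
The plan is to mimic the structure of Propositions~\ref{prop-compare-bh2-4}, \ref{prop-compare-bh-5-6}, \ref{prop-compare-bh3} and \ref{prop-compare-bh12}: for each of parts~(1) and~(2), verify Levelt's criterion for the proposed pair $(A,B)$, compute the determinants and traces to read off the eigenvalues, match these with the BH parameters, and then show that $A$ and $B$ generate $\langle R_1,R_2,R_3\rangle$. For ${\mathcal T}(2,{\bf S}_2)$ we have $p=2$, so $e^{2\pi i/p}=-1$; with the normalisation $\sigma=\tau=1$ and $\rho=e^{2\pi i/3}(\sqrt 5-i\sqrt 3)/2$, each $R_j$ has eigenvalues $-1,1,1$ and hence determinant $-1$.

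For part~(1), take $A=R_3R_2R_1$ and $B=R_2R_3R_2R_1$. A direct word calculation gives $BA^{-1}=R_2$, which is a complex reflection, so Levelt's criterion holds. From the multiplicativity of the determinant, $\det(A)=(-1)^3=-1$ and $\det(B)=(-1)^4=1$, consistent with $\prod_j e^{2\pi i\alpha_j}=-1$ and $\prod_j e^{2\pi i\beta_j}=1$ for the BH7 values. Writing the $R_j$ as explicit $3\times 3$ matrices at $p=2$ and carrying out the multiplication, I would compute ${\rm tr}(A)$ and ${\rm tr}(B)$; the resulting characteristic polynomials should have roots $a_j=e^{2\pi i\alpha_j}$ and $b_j=e^{2\pi i\beta_j}$ for the BH7 parameters $\alpha_j=1/6,\,11/30,\,29/30$ and $\beta_j=0,\,1/5,\,4/5$, with no scalar shift needed.

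Part~(2) is handled in exactly the same way with $A=R_3R_2R_1$ and $B=R_2R_1$, so that $BA^{-1}=R_3^{-1}$ is a complex reflection. Here $\det(A)=-1$ and $\det(B)=1$, and an explicit computation of ${\rm tr}(B)$ will confirm that the eigenvalues of $B$ agree with the BH8 values $\beta_j=0,\,1/4,\,3/4$ (the eigenvalues of $A$ being unchanged from part~(1)).

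The main obstacle is verifying that $A$ and $B$ generate the full group $\langle R_1,R_2,R_3\rangle$ rather than a proper subgroup. In part~(1) we immediately recover $R_2=BA^{-1}$ and the product $W=R_3R_2R_1=A$, but $R_1$ and $R_3$ must still be extracted individually. My plan is to use the braid relations ${\rm br}(R_2,R_3)={\rm br}(R_1,R_3)=3$ and ${\rm br}(R_1,R_2)=4$ together with $R_j^2=I$ (because $p=2$), in the spirit of the word argument preceding \eqref{eq-A-334} which expressed $R_1$ as a word in $R_2$ and $R_1R_2R_3$. Analogous manipulations should yield $R_1$ and $R_3$ as explicit words in $A$ and $BA^{-1}$, and a symmetric argument handles part~(2). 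Failing this, an alternative would be to use the fact that ${\mathcal T}(2,{\bf S}_2)$ is a known finite primitive group (related to Shephard--Todd $G_{27}$) of a specific order, so generation could be verified by computing $|\langle A,B\rangle|$ directly and comparing.
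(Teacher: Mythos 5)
Your proposal is correct and follows essentially the same route as the paper: verify Levelt's criterion via $BA^{-1}=R_2$ in (1) and $BA^{-1}=R_3^{-1}$ in (2), then compute determinants and traces (the paper finds ${\rm tr}(R_3R_2R_1)=1-\bar\rho$, ${\rm tr}(R_2R_3R_2R_1)=2-\rho-\bar\rho$ and ${\rm tr}(R_2R_1)=1$) and match the eigenvalues with the BH7 and BH8 parameters, with no scalar shift, exactly as you predict. Your worry about generation is already settled for part (1) by the proposition preceding \eqref{eq-A-334}, since $p=2$ gives $A^{-1}=R_1R_2R_3$ and $BA^{-1}=R_2$, and for part (2) your fallback via finiteness (both BH8 and ${\mathcal T}(2,{\bf S}_2)\cong\,$BH7 being copies of the Shephard--Todd group $G_{27}$, so a subgroup of full order is the whole group) is exactly the point the paper leaves implicit.
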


\begin{proof}
\begin{enumerate}
\item[(1)] Let $A=R_3R_2R_1=(R_1R_2R_3)^{-1}$ and
$B=R_2R_3R_2R_1=(R_1R_2R_3R_2^{-1})^{-1}$. We find that
$$
\begin{array}{ll}
{\rm det}(A)=-1,\quad & 
{\rm tr}(A)=1-\bar\rho=e^{i\pi/3}(1+\sqrt{5})/2
=e^{i\pi/3}(1+e^{2\pi i/5}+e^{8\pi i/5}), \\
{\rm det}(B)=1,\quad & 
{\rm tr}(B)=2-\rho-\bar\rho=(1+\sqrt{5})/2=1+e^{2\pi i/5}+e^{8\pi i/5}.
\end{array}
$$
Therefore the parameters of this group are
$\alpha_1=1/6$, $\alpha_2=11/30$, $\alpha_3=29/30$, $\beta_1=0$,
$\beta_2=1/5$ and $\beta_3=4/5$. These are the parameters of 
BH7.
\item[(2)] We take $A=R_3R_2R_1$ and $B=R_2R_1$. The parameters 
$\alpha_j$ are the same as in part (1). We have
$$
{\rm det}(B)=1,\quad {\rm tr}(B)=1=1+i-i.
$$
Therefore the parameters of $B$ are $\beta_1=0$, $\beta_2=1/4$ and
$\beta_3=3/4$. These are the parameters of BH8.
\end{enumerate}
\end{proof}

\medskip

We could compare the Hermitian forms $D$ and $H$ as in earlier sections.
The same method works, but the matrix $V$ is slightly harder to write down.
Therefore we leave the details to the reader.

\subsection{$(2,3,n;n)$ triangle groups}

In this case we suppose
$$
{\rm br}(R_2,R_3)=2,\quad {\rm br}(R_1,R_3)=3,\quad 
{\rm br}(R_1,R_2)=n.
$$ 
This means that $\sigma=0$, $|\tau|=1$ and
$|\rho|=2\cos(\pi/n)$. This group is rigid and we may take $\tau=1$
and $\rho=2\cos(\pi/n)$ for $n=3,\,4,\,5$ or $6$; see \cite{DPP}.

\begin{proposition}
Suppose that ${\rm br}(R_2,R_3)=2$ and ${\rm br}(R_1,R_3)=3$.
Then $\langle R_1,R_2,R_3\rangle$ is generated by $R_3$ and $R_1R_2$.
\end{proposition}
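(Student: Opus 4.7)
The plan is to set $X=R_3$ and $Y=R_1R_2$ and show that both $R_1$ and $R_2$ lie in $\langle X,Y\rangle$; since $R_3=X$ is already in this subgroup, the reverse containment $\langle R_1,R_2,R_3\rangle\subseteq\langle X,Y\rangle$ will follow, and the other containment is trivial. The two relations available are the commutation $R_2R_3=R_3R_2$ (from ${\rm br}(R_2,R_3)=2$) and the classical braid relation $R_1R_3R_1=R_3R_1R_3$ (from ${\rm br}(R_1,R_3)=3$), so I expect a short word in $X$ and $Y$ should collapse to $R_1^{\pm 1}$ using each relation exactly once.

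The key step is to search among short alternating words in $X^{\pm 1}$ and $Y^{\pm 1}$ for one whose $R_2$'s cancel after using the commutation, and whose remaining $R_1,R_3$ part is a consequence of the braid relation. A natural candidate is
$$
XYX^{-1}Y^{-1}X^{-1}=R_3(R_1R_2)R_3^{-1}(R_1R_2)^{-1}R_3^{-1}
=R_3R_1R_2R_3^{-1}R_2^{-1}R_1^{-1}R_3^{-1}.
$$
Because $R_2$ commutes with $R_3^{-1}$, the central $R_2R_3^{-1}R_2^{-1}$ simplifies to $R_3^{-1}$, leaving $R_3R_1R_3^{-1}R_1^{-1}R_3^{-1}$. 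Then the claim that this word equals $R_1^{-1}$ is equivalent, after rearrangement, to $R_1R_3R_1=R_3R_1R_3$, which is exactly the braid relation. This yields $R_1^{-1}=XYX^{-1}Y^{-1}X^{-1}\in\langle X,Y\rangle$, and consequently $R_2=R_1^{-1}Y\in\langle X,Y\rangle$.

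The only real obstacle is guessing the right word; once it is written down, the verification is two lines. I would present the argument by writing down the candidate word, performing the $R_2$-cancellation using ${\rm br}(R_2,R_3)=2$, and then verifying the residual identity by rearranging it back to the braid relation ${\rm br}(R_1,R_3)=3$. After that, recovering $R_2$ as $R_1^{-1}Y$ is immediate and $R_3=X$ is tautological, completing the proof.
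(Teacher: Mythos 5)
Your proof is correct and follows essentially the same route as the paper: the paper uses the word $R_3(R_1R_2)R_3(R_1R_2)^{-1}R_3^{-1}$, which collapses to $R_1$ via the commutation $R_2R_3=R_3R_2$ and the braid relation, while you use the variant $R_3(R_1R_2)R_3^{-1}(R_1R_2)^{-1}R_3^{-1}$, which collapses to $R_1^{-1}$ by the same two relations. Recovering $R_2=R_1^{-1}(R_1R_2)$ is then identical in both arguments.
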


\begin{proof}
Using ${\rm br}(R_2,R_3)=2$ and ${\rm br}(R_1,R_3)=3$ we have
\begin{eqnarray*}
R_1 & = & R_3R_1R_3R_1^{-1}R_3^{-1} \ =\ R_3(R_1R_2)R_3(R_1R_2)^{-1}R_3^{-1}, \\
R_2 & = & R_1^{-1}(R_1R_2) %\\
%& = & 
\ = \ R_3(R_1R_2)R_3^{-1}(R_1R_2)^{-1}R_3^{-1}(R_1R_2).
%\ = \ (R_3R_1R_2)R_3^{-1}(R_3R_1R_2)^{-1}R_3^{-1}(R_3R_1R_2).
\end{eqnarray*}
\end{proof}

\medskip

We take $R_1R_2$ and $R_3R_1R_2$ as generators. We do a scalar
shift by multiplying both generators by $e^{-2\pi i/p}$. We have
\begin{eqnarray*}
e^{-2\pi i/p}R_1R_2 & = & \left(\begin{matrix}
1-|\rho|^2 & \rho & -e^{-2\pi i/p} \\
-\bar\rho & 1 & 0 \\
0 & 0 & e^{-2\pi i/p}
\end{matrix}\right), \\
e^{-2\pi i/p}R_3R_1R_2 & = & \left(\begin{matrix}
1-|\rho|^2 & \rho & -e^{-2\pi i/p} \\ -\bar\rho & 1 & 0 \\ 
e^{2\pi i/p}-e^{2\pi i/p}|\rho|^2 & e^{2\pi i/p}\rho & 0 
\end{matrix}\right).
\end{eqnarray*}
Conjugating by
$$
C=\left(\begin{matrix}
0 & -e^{-2\pi i/p} & e^{-2\pi i/p} \\
0 & 0 & e^{-2\pi i/p}\bar\rho \\
1 & |\rho|^2-2 & 1 \end{matrix}\right)
$$
gives
\begin{eqnarray}
A & = & C^{-1}(e^{-2\pi i/p}R_1R_2)C \notag \\
& = & \left(\begin{matrix}
2-|\rho|^2+e^{-2\pi i/p} & e^{-2\pi i/p}|\rho|^2-2e^{-2\pi i/p}-1 & e^{-2\pi i/p} \\
1 & 0 & 0 \\ 0 & 1 & 0 \end{matrix}\right),
\label{eq-A-23n} \\
B & = & C^{-1}(e^{-2\pi i/p}R_3R_1R_2)C \notag \\
& = & \left(\begin{matrix}
2-|\rho|^2 & |\rho|^2-2 & 1 \\ 1 & 0 & 0 \\ 0 & 1 & 0 \end{matrix}\right), 
\label{eq-B-23n}.
\end{eqnarray}
Observe that $BA^{-1}=C^{-1}R_3C$ is a complex reflection. 
Therefore, we have:

\begin{proposition}\label{prop-levelt-23n}
The matrices $A$ and $B$ given by \eqref{eq-A-23n} and \eqref{eq-B-23n} 
satisfy Levelt's criterion and so $\langle R_1,R_2,R_3\rangle$ is a 
hypergeometric group.
\end{proposition}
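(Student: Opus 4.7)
The proof is short because the heavy lifting has been done in the construction preceding the statement; what remains is essentially bookkeeping. The plan is first to verify that the matrices $A$ and $B$ in \eqref{eq-A-23n} and \eqref{eq-B-23n} are already in the normal form \eqref{eq-AB-general}, and then to check that $BA^{-1}$ is a complex reflection, so that Levelt's theorem (Theorem~\ref{thm-levelt}) applies directly. The final step is to pass from the two-generator group $\langle A,B\rangle$ back to the three-generator reflection group $\langle R_1,R_2,R_3\rangle$.

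For the first step, inspection of \eqref{eq-A-23n} and \eqref{eq-B-23n} shows that the lower $2\times 3$ blocks of both $A$ and $B$ are exactly the required rows $(1,0,0)$ and $(0,1,0)$; the top rows encode the negatives of the elementary symmetric functions of the eigenvalues, so matching these to $-A_j$ and $-B_j$ identifies the characteristic polynomials and gives the eigenvalues $a_j=e^{2\pi i\alpha_j}$ and $b_j=e^{2\pi i\beta_j}$. For the second step, the preceding proposition established that $\langle R_1,R_2,R_3\rangle$ is generated by $R_1R_2$ and $R_3R_1R_2$; after the scalar shift by $e^{-2\pi i/p}$ and the conjugation by $C$ carried out in the text, this yields $A$ and $B$, with $BA^{-1}=C^{-1}R_3C$. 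Since $R_3$ is by hypothesis a complex reflection and conjugation preserves the rank-one condition, $BA^{-1}-I$ has rank one, as required by Levelt's criterion.

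With both conditions verified, Theorem~\ref{thm-levelt} immediately gives that $\langle A,B\rangle$ is a hypergeometric group with numerator parameters $\{a_1,a_2,a_3\}$ and denominator parameters $\{b_1,b_2,b_3\}$. Since $A$ and $B$ are obtained from generators of $\langle R_1,R_2,R_3\rangle$ by the combined operation of a scalar shift (multiplication of each generator by $e^{-2\pi i/p}$) and an inner automorphism of $\mathrm{GL}(3,\mathbb{C})$ by $C$, the group $\langle R_1,R_2,R_3\rangle$ is itself a hypergeometric group (up to scalar shift, which is explicitly allowed by the definition and by the last sentence of Theorem~\ref{thm-levelt}). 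There is no serious obstacle here: the only point requiring any care is the verification that $BA^{-1}$ really is a complex reflection, and this is read off directly from the identity $BA^{-1}=C^{-1}R_3C$ rather than by a computation with the explicit entries.
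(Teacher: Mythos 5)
Your proposal is correct and follows essentially the same route as the paper: the text preceding the proposition already puts $A$ and $B$ in the normal form \eqref{eq-A-general}--\eqref{eq-B-general} via the scalar shift and the conjugation by $C$, identifies $BA^{-1}=C^{-1}R_3C$ as a complex reflection, and relies on the preceding generation result for $\langle R_1,R_2,R_3\rangle$, exactly as you do. The only difference is presentational: the paper leaves these observations as the (implicit) proof, while you spell out the bookkeeping.
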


Since $|\rho|=2\cos(\pi/n)$ we see that 
$$
{\rm tr}(A)=e^{-2\pi i/p}-2\cos(2\pi/n), \quad 
{\rm tr}(B)=2-4\cos^2(\pi/n)=-2\cos(2\pi/n).
$$
Thus
$$
\begin{array}{|l|lll|lll|}
\hline
n & \alpha_1 & \alpha_2 & \alpha_3 & \beta_1 & \beta_2 & \beta_3 \\
\hline
3 & 1/2-1/n & 1/2+1/n & 1-1/p & 0 & 1/4 & 3/4 
\\
4 & 1/2-1/n & 1/2+1/n & 1-1/p & 0 & 1/3 & 2/3 
\\
5 & 1/2-1/n & 1/2+1/n & 1-1/p & 0 & 2/5 & 3/5 
\\
6 & 1/2-1/n & 1/2+1/n & 1-1/p & 0 & 1/2 & 1/2 
\\
\hline
\end{array}
$$

\bigskip

\subsection{$(3,4,4;4)$ triangle groups}

In this case we suppose we have braid relations
$$
{\rm br}(R_2,R_3)=3,\quad 
{\rm br}(R_1,R_3)={\rm br}(R_1,R_2)={\rm br}(R_1,R_3^{-1}R_2R_3)=4.
$$
Following Thompson \cite{Thompson}, we choose $\rho=\tau=\sqrt{2}$ 
and $\sigma=-\overline{\omega}=e^{i\pi/3}$. 
The group generated by the $R_j$ is called ${\mathcal T}(p,{\bf E}_2)$.
Among the values of $p$ for which these groups are discrete there is one 
non-arithmetic lattice, ${\mathcal T}(4,{\bf E}_2)$. We show below that
this group is a subgroup of a hypergeometric monodromy group. 
Unfortunately, it seems that this is not true for some other values of $p$.

There is a symmetry $S$ of order 3 satisfying:
\begin{equation}\label{eq-E2-S-rels}
SR_1S^{-1}=R_1,\quad
SR_2S^{-1}=R_3,\quad
SR_3S^{-1}=R_3^{-1}R_2R_3,\quad
SR_3^{-1}R_2R_3S^{-1}=R_2.
\end{equation}
In particular, $SR_2R_3S^{-1}=R_2R_3$. 

\begin{proposition}
If $p$ is not divisible by 3 then $\langle R_1,R_2,R_3,S\rangle$ is generated
by $R_3$ and $SR_1$.
\end{proposition}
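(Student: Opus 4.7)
The plan is to check that $G := \langle R_3,\, SR_1\rangle$ coincides with $\langle R_1, R_2, R_3, S\rangle$. The inclusion $G\subseteq \langle R_1, R_2, R_3, S\rangle$ is immediate, so the substantive task is to recover $R_1$, $R_2$ and $S$ individually from the two generators. Having $R_3$ already, the point is that the second generator $SR_1$ is a product of two commuting elements, and I must split it into its factors.

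The key observation is that the first identity in \eqref{eq-E2-S-rels} reads $SR_1S^{-1}=R_1$, i.e.\ $S$ and $R_1$ commute. Hence powers of $SR_1$ factor as
\begin{equation*}
(SR_1)^k = S^k R_1^k\qquad\text{for all } k\in\mathbb{Z}.
\end{equation*}
Since $S$ has order $3$ and $R_1$ has order exactly $p$ (its eigenvalues are $e^{2\pi i/p}$, $1$, $1$), the assumption $\gcd(3,p)=1$ gives an integer $m$ with $3m\equiv 1\pmod p$. Then $S^{3m}=I$ and $R_1^{3m}=R_1$, so
\begin{equation*}
(SR_1)^{3m}=R_1\in G,
\end{equation*}
and consequently $S=(SR_1)R_1^{-1}\in G$.

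With $S\in G$ in hand, the second identity in \eqref{eq-E2-S-rels}, namely $SR_2S^{-1}=R_3$, yields $R_2=S^{-1}R_3S\in G$. Therefore $G$ contains each of $R_1$, $R_2$, $R_3$ and $S$, giving the desired equality.

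The only place the hypothesis $3\nmid p$ is used is in extracting $R_1$ from the commuting product $SR_1$: if $3\mid p$ then $S^{p}=I$, and raising $SR_1$ to any power divisible by $p$ returns a power of $S$ alone while no power produces $R_1$ alone, so this separation trick breaks down. I therefore expect the only real ``obstacle'' to be precisely this arithmetic step, which is short; the remainder of the argument is a direct consequence of the symmetry relations \eqref{eq-E2-S-rels}, and presumably motivates the failure in the excluded cases that the following subsection will need to address.
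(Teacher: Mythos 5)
Your argument is correct and is essentially the paper's own proof: both exploit that $S$ and $R_1$ commute with $S^3=I$, raise $SR_1$ to a suitable power (the paper writes $p=3m\pm1$ and gets $(SR_1)^{3m}=R_1^{\mp1}$, you choose $3m\equiv1\pmod p$ to get $R_1$ directly) to recover $R_1$ and hence $S$, and then obtain $R_2=S^{-1}R_3S$. The two formulations of the exponent choice are interchangeable, so there is nothing to add.
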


\begin{proof}
When $p$, which is the order of $R_1$, is not a multiple of 3,
say $p=3m\pm1$, since $S$ and $R_1$ commute and $S^3$ is the identity, 
we have
$$
(SR_1)^{3m}=(S^3)^mR_1^{3m}=R_1^{p\mp 1}=R_1^{\mp 1}.
$$
Thus $R_1$ (and hence $S$) lies in the group $\langle SR_1,\, R_3\rangle$.
Furthermore $R_2=S^{-1}R_3S$ and so $R_1$, $R_2$, $R_3$ and $S$ all
lie in $\langle SR_1,\, R_3\rangle$.
\end{proof}

\medskip

As a matrix $S$ is given by
$$
S=\left(\begin{matrix} 1 & 0 & 0 \\ 0 & 0 & \omega \\
0 & -1 & -\bar\omega \end{matrix}\right).
$$
We take $SR_1$ and $R_3SR_1$ as generators and we perform a scalar shift
by multiplying by $e^{-2\pi i/p}$.
We have
\begin{eqnarray*}
e^{-2\pi i/p}SR_1 & = & \left(\begin{matrix}
1 & \sqrt{2}e^{-2\pi i/p} & -\sqrt{2}e^{-2\pi i/p} \\
0 & 0 & \omega e^{-2\pi i/p}\\
0 & -e^{-2\pi i/p} & -\bar\omega e^{-2\pi i/p} \end{matrix}\right), \\
e^{-2\pi i/p}R_3SR_1 & = & \left(\begin{matrix}
1 & \sqrt{2}e^{-2\pi i/p} & -\sqrt{2}e^{-2\pi i/p} \\
0 & 0 & \omega e^{-2\pi i/p} \\ %\bar\omega e^{-2\pi i/p} \\
\sqrt{2}e^{2\pi i/p} & 1 & -2 \end{matrix}\right).
\end{eqnarray*}
Conjugating by
$$
C=\left(\begin{matrix}
0 & -\sqrt{2} & \sqrt{2}\omega e^{-2\pi i/p} \\
0 & \omega & -\omega \\
e^{2\pi i/p} & -e^{2\pi i/p} & 0 \end{matrix}\right)
$$
gives
\begin{eqnarray}
A & = & C^{-1}(e^{-2\pi i/p}SR_1)C
=\left(\begin{matrix}
1-\bar\omega e^{-2\pi i/p} & \bar\omega e^{-2\pi i/p}-\omega e^{-4\pi i/p} &
\omega e^{-4\pi i/p} \\
1 & 0 & 0 \\ 0 & 1 & 0 \end{matrix}\right),
\label{eq-A-344} \\
B & = & C^{-1}(e^{-2\pi i/p}R_3SR_1)C
=\left(\begin{matrix} -1 & \omega e^{-2\pi i/p} & \omega e^{-2\pi i/p} \\
1 & 0 & 0 \\ 0 & 1 & 0 \end{matrix}\right).
\label{eq-B-344}
\end{eqnarray}
Note that $BA^{-1}=C^{-1}R_3C$ is a complex reflection. Therefore
we have:

\begin{proposition}\label{prop-levelt-344}
The matrices $A$ and $B$ given by \eqref{eq-A-344} and \eqref{eq-B-344}
satisfy Levelt's criterion. Therefore, when $p$ is not divisible by 3
the group $\langle R_1,R_2,R_3,S\rangle$ is a hypergeometric group.
\end{proposition}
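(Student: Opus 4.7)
The plan is to verify the three hypotheses of Levelt's criterion (Theorem~\ref{thm-levelt}) for the explicit matrices $A$ and $B$ in \eqref{eq-A-344} and \eqref{eq-B-344}, and then bootstrap from Levelt's conclusion together with the preceding generation proposition to identify $\langle R_1,R_2,R_3,S\rangle$ as a hypergeometric group under the assumption $3 \nmid p$.

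The first two hypotheses are essentially free. Inspecting \eqref{eq-A-344} and \eqref{eq-B-344} shows that $A$ and $B$ are already in the normal form \eqref{eq-AB-general}, so their first rows read off the elementary symmetric functions of their eigenvalues $a_j$ and $b_j$; no further work is needed to match the companion-matrix shape required by Theorem~\ref{thm-levelt}. The rank-one condition on $BA^{-1}-I$ has, in effect, already been recorded in the line preceding the statement: both $A$ and $B$ arise from $e^{-2\pi i/p}SR_1$ and $e^{-2\pi i/p}R_3SR_1$ by the same conjugation by $C$, so $BA^{-1}=C^{-1}R_3C$, and $R_3$ is a complex reflection by hypothesis on the triangle group. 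That leaves the genericity condition $a_j\neq b_k$ for all $j,k$, which I would check by computing $\mathrm{tr}(A)$, $\mathrm{det}(A)$, $\mathrm{tr}(B)$, $\mathrm{det}(B)$ from \eqref{eq-A-344}, \eqref{eq-B-344} (the entries only involve $\omega$, $e^{-2\pi i/p}$, and their conjugates), solving for the two triples of eigenvalues, and verifying disjointness case by case; since the two spectra carry different scalar shifts controlled by the nontrivial eigenvalue of $R_3$, no coincidence occurs for any admissible $p$.

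With Levelt's criterion in hand, the theorem provides the hypergeometric group generated by $A$ and $B$. To convert this into the stated conclusion about $\langle R_1,R_2,R_3,S\rangle$, note that conjugation by $C$ is an isomorphism and that the scalar factors $e^{-2\pi i/p}$ simply amount to a scalar shift in the sense of Theorem~\ref{thm-levelt}, which again yields a hypergeometric group. Therefore $\langle A,B\rangle$ is conjugate, after scalar shift, to $\langle SR_1,\,R_3SR_1\rangle=\langle SR_1,\,R_3\rangle$. Finally, invoke the proposition immediately preceding: when $p$ is not divisible by $3$, the elements $SR_1$ and $R_3$ generate all of $\langle R_1,R_2,R_3,S\rangle$, which therefore coincides (up to scalar shift and conjugacy) with the hypergeometric group $\langle A,B\rangle$.

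The main obstacle is precisely the role of the hypothesis $3 \nmid p$. If $3$ divides $p$, then $(SR_1)^{3m}=S^{3m}R_1^{3m}=I$ rather than $R_1^{\mp 1}$, so the chain of identifications used in the previous proposition breaks and $\langle SR_1,R_3\rangle$ is strictly smaller than $\langle R_1,R_2,R_3,S\rangle$. In that case Levelt still produces a hypergeometric group, but it no longer contains the full complex reflection lattice ${\mathcal T}(p,{\bf E}_2)$; this is exactly the obstruction that forces the three exceptional values $p=3,\,6,\,12$ to be excluded from Theorem~\ref{thm-main}. Avoiding this pitfall amounts to no more than honest bookkeeping, but it is the sole reason the hypothesis on $p$ appears in the second sentence of the proposition.
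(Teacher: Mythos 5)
Your argument is essentially the paper's: the matrices \eqref{eq-A-344} and \eqref{eq-B-344} are already in the companion normal form \eqref{eq-AB-general}, $BA^{-1}=C^{-1}R_3C$ is a complex reflection, so Levelt's criterion applies, and the preceding proposition (generation of $\langle R_1,R_2,R_3,S\rangle$ by $SR_1$ and $R_3$ when $3\nmid p$) together with the conjugation by $C$ and the scalar shift by $e^{-2\pi i/p}$ gives the stated conclusion. Your additional check that $a_j\neq b_k$ is more than the paper records and is fine for the lattice values at stake (in particular $p=4$), though your blanket claim is not literally true at $p=2$, where $a_3=b_2=-1$; this does not affect the cases relevant to Theorem~\ref{thm-main}.
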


Since we have 
$$
\begin{array}{ll}
{\rm det}(A)=\omega e^{-4\pi i/p},\quad & 
{\rm tr}(A)=1-\bar\omega e^{-2\pi i/p}=1+e^{-2\pi i/p}+\omega e^{-2\pi i/p}, \\
{\rm det}(B)=\omega e^{-2\pi i/p},\quad &
{\rm tr}(B)=-1,
\end{array}
$$
we can calculate the eigenvalues of $A$ as $a_1=1$, $a_2=\omega e^{-2\pi i/p}$,
$a_3=e^{-2\pi i/p}$ and the eigenvalues of $B$ as 
$b_1=-\bar\omega e^{-i\pi/p}$, $b_2=-1$, $b_3=\bar\omega e^{-i\pi /p}$.
Therefore, the group has the following angle parameters:
$$
\begin{array}{|lll|lll|}
\hline
\alpha_1 & \alpha_2 & \alpha_3 & \beta_1 & \beta_2 & \beta_3 \\
\hline
0 & 1/3-1/p & 1-1/p & 1/6-1/2p & 1/2 & 2/3-1/2p 
\\
\hline
\end{array}
$$

\medskip

In contrast, we now show this method will not work for 
$\langle R_1,R_2,R_3,S\rangle$ when $p=3$ or $6$. Of course, this does not rule
out the possibility that these groups may be commensurable to a 
two generator group.

\begin{proposition}
Suppose that $p=3$ or $p=6$. Then the group 
$\Gamma=\langle R_1,R_2,R_3,S\rangle$
does not have a presentation with two generators. In particular, it is not
a hypergeometric group.
\end{proposition}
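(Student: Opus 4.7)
The plan is to show that the mod-$3$ abelianisation $\Gamma^{\rm ab}\otimes\mathbb{F}_3$ has $\mathbb{F}_3$-dimension at least $3$ whenever $p=3$ or $p=6$. Since every quotient of a $k$-generated group is $k$-generated, this immediately rules out $\Gamma$ being generated by $2$ elements, and a fortiori rules out $\Gamma$ being a hypergeometric group (which by Definition~2.3 is $2$-generated by construction).

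The first step is to list the relations which survive in the abelianisation. The braid relation ${\rm br}(R_2,R_3)=3$, namely $R_2R_3R_2=R_3R_2R_3$, abelianises to $\bar R_2=\bar R_3$, and the symmetry relation $SR_2S^{-1}=R_3$ from \eqref{eq-E2-S-rels} forces the same identification; the remaining relations in \eqref{eq-E2-S-rels} and the length-$4$ braid relations become trivial in $\Gamma^{\rm ab}$. Together with the orders $\bar R_j^{\,p}=1$ and $\bar S^{\,3}=1$, this exhibits $\Gamma^{\rm ab}$ as a quotient of $\mathbb{Z}/p\oplus\mathbb{Z}/p\oplus\mathbb{Z}/3$ with generators $\bar R_1$, $\bar R_2(=\bar R_3)$ and $\bar S$. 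Reducing mod~$3$, this bounds $\Gamma^{\rm ab}\otimes\mathbb{F}_3$ above by $\mathbb{F}_3^{\,3}$.

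It then remains to show that the resulting surjection $\mathbb{F}_3^{\,3}\to\Gamma^{\rm ab}\otimes\mathbb{F}_3$ is an isomorphism, which I would do by exhibiting three $\mathbb{F}_3$-independent characters $\chi_1,\chi_2,\chi_3\colon\Gamma\to\mathbb{F}_3$. A first candidate is the mod-$3$ reduction of $\det\colon\Gamma\to\mathbb{C}^\times$, which sends each $R_j$ to $e^{2\pi i/p}$ and $S$ to $\omega$. A second comes from the natural projection of $\Gamma$ onto $\Gamma/N$, where $N$ is the normal closure of $\{R_1,R_2,R_3\}$: provided the image of $S$ is non-trivial there, this isolates the $\bar S$-coordinate. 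A third must distinguish $\bar R_1$ from $\bar R_2$, which one expects to be possible because $R_1$ and $R_2$ satisfy a braid relation of length $4$, not $3$, and so no element of $\Gamma$ conjugates $R_1$ to $R_2$.

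The main obstacle will be showing that no unexpected matrix identity in the specific lattice $\mathcal{T}(p,\mathbf{E}_2)$ collapses $\Gamma^{\rm ab}\otimes\mathbb{F}_3$ below dimension~$3$: for instance, one must rule out a hidden relation of the form $S=w(R_1,R_2,R_3)$ for some word $w$, which would kill $\chi_3$. To settle this I would use an explicit presentation of $\mathcal{T}(p,\mathbf{E}_2)$ for $p=3,6$, either from Thompson \cite{Thompson} or derived from the fundamental polyhedron in \cite{DPP}, and verify by direct computation that each additional relator abelianises to zero modulo~$3$. Once this is confirmed, $\Gamma^{\rm ab}\otimes\mathbb{F}_3\cong\mathbb{F}_3^{\,3}$ requires three generators, so $\Gamma$ itself requires at least three generators, completing the proof.
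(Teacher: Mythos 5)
Your overall strategy is the paper's: bound the number of generators from below by the minimal number of generators of the abelianisation (the paper shows $\Gamma^{\rm ab}\cong{\mathbb Z}/p\times{\mathbb Z}/p\times{\mathbb Z}/3$, which needs three generators when $3\mid p$). Your upper-bound half is fine, but it is the wrong half: the whole content of the statement is the \emph{lower} bound, i.e.\ that no relation of the specific lattice ${\mathcal T}(p,{\bf E}_2)$ collapses $\Gamma^{\rm ab}\otimes{\mathbb F}_3$ below dimension $3$. You explicitly defer this to ``use an explicit presentation \dots and verify by direct computation that each additional relator abelianises to zero modulo $3$'' --- but that verification \emph{is} the proof. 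The paper carries it out: it takes the presentation of $\langle R_1,R_2,R_3\rangle$ from Section~A.6 of \cite{DPP}, adjoins $S$ via \eqref{eq-E2-S-rels} and eliminates $R_3=SR_2S^{-1}$ (noting along the way that ${\rm br}_4(R_2,S)$ replaces ${\rm br}_3(R_2,R_3)$), and then checks relator by relator --- using that $p\mid 6$, that $p\mid 4p/(p-4)$ for $p=6$, that $p\mid 3p/(p-3)$ for $p=6$, and that relators with zero or negative exponent denominators are omitted when $p=3$ --- that everything follows from $R_1^p$, $R_2^p$, $S^3$ in the abelianisation. As written, your proposal asserts the conclusion of this computation without performing it.

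The interim substitutes you offer do not close the gap. The determinant character is genuine but gives only one of the three needed functionals (and for $p=3$ it sends $\bar R_1$, $\bar R_2$ and $\bar S$ all to $\omega$, so it separates nothing). The second character requires knowing that $S$ does not lie in the normal closure $N$ of $\{R_1,R_2,R_3\}$; this is not obvious (determinants cannot detect it, since for $p=3$ or $6$ the determinants of the $R_j$ already generate a group containing $\mu_3$) and is essentially part of what must be proved. The third argument is a non sequitur: even granting that no element of $\Gamma$ conjugates $R_1$ to $R_2$, non-conjugacy does not produce a character separating $\bar R_1$ from $\bar R_2$ --- relations in the abelianisation need not come from conjugation (a perfect group has non-conjugate elements and no nontrivial characters at all). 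So the proposal identifies the correct approach and the correct source for completing it, but the decisive computation is missing and the proposed shortcuts would not replace it.
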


\begin{proof}
When $p=3,4,6$ a presentation for $\langle R_1,R_2,R_3\rangle$ 
is given in Section A.6 of \cite{DPP}. It is:
$$
\left\langle R_1,\,R_2,\,R_3\ \Big\vert\ \begin{array}{c}
R_1^p,\, R_2^p,\, R_3^p,\,(R_1R_2R_3)^6,\,{\rm br}_3(R_2,R_3),\, \\
{\rm br}_4(R_3,R_1),\,{\rm br}_4(R_1,R_2),\,
{\rm br}_4(R_1,R_2R_3R_2^{-1}),\, {\rm br}_6(R_3,R_1R_2R_1^{-1}),\, \\
(R_1R_2)^{\frac{4p}{p-4}},\, (R_1R_3)^{\frac{4p}{p-4}},\, 
(R_1R_2R_3R_2^{-1})^{\frac{4p}{p-4}},\, (R_3R_1R_2R_1^{-1})^{\frac{3p}{p-3}}
\end{array}\right\rangle.
$$
(A relation should be omitted when the denominator of its exponent
is zero or negative.) 

We now adjoin $S$ and use the relations \eqref{eq-E2-S-rels} to get a
presentation for $\Gamma=\langle R_1,R_2,R_3,S\rangle$. We eliminate the
generator $R_3$ by substituting $R_3=SR_2S^{-1}$.
Note that 
$$
R_2=S(R_3^{-1}R_2R_3)S^{-1}=S^{-1}R_2^{-1}S^{-1}R_2SR_2S
$$ 
is equivalent to ${\rm br}_4(R_2,S)$. In turn, this implies 
\begin{eqnarray*}
R_3^{-1}R_2^{-1}R_3^{-1}R_2R_3R_2 
& = & (SR_2^{-1}S^{-1})R_2^{-1}(SR_2^{-1}S^{-1})R_2(SR_2S^{-1})R_2 \\
& = & S(R_2^{-1}S^{-1}R_2^{-1}S^{-1})S^{-1}R_2^{-1}S^{-1}(R_2SR_2S)SR_2 \\
& = & S(S^{-1}R_2^{-1}S^{-1}R_2^{-1})S^{-1}R_2^{-1}S^{-1}(SR_2SR_2)SR_2 \\
& = & 1.
\end{eqnarray*}
Thus ${\rm br}_4(R_2,S)$ implies 
${\rm br}_3(R_2,R_3)={\rm br}_3(R_2,SR_2S^{-1})$. Hence $\Gamma$ has a
presentation
$$
\left\langle R_1,\,R_2,\,S\ \Big\vert\ \begin{array}{c}
R_1^p,\, R_2^p,\, S^3,\,(R_1R_2SR_2S^{-1})^6,\, \\
{\rm br}_2(R_1,S),\, {\rm br}_4(R_2,S),\,{\rm br}_4(R_1,R_2),\,
{\rm br}_6(SR_2S^{-1},R_1R_2R_1^{-1}),\, \\
(R_1R_2)^{\frac{4p}{p-4}},\, (SR_2S^{-1}R_1R_2R_1^{-1})^{\frac{3p}{p-3}}
\end{array}\right\rangle.
$$

Now consider the abelianisation $\Gamma'$ of $\Gamma$. We claim that
$\Gamma'$ is a direct product of two cyclic groups of order $p$
and a group of order $3$. Since $p=3$ or $6$ is divisible by 3 we see that this
group requires at least three generators. If $\Gamma$ were to have a two
generator presentation then this would lead to a presentation for $\Gamma'$
with at most two generators, which is a contradiction. 
Therefore the result follows from this claim.

We now prove the claim. We investigate the effect of abelianisation on each of
the relations. Recall we are only considering $p=3$ or $p=6$:
\begin{itemize}
\item Since $p$ divides 6 the abelianisation of 
$(R_1R_2SR_2S^{-1})^6$ follows from $R_1^p$ and $R_2^p$.
\item The abelianisation of any braid relation of
even length is the trivial relation.
\item Since $p$ divides $4p/(p-4)$ the abelianisation of 
$(R_1R_2)^{\frac{4p}{p-4}}$ follows from $R_1^p$ and $R_2^p$.
\item When $p=6$, we see that $p$ divides $3p/(p-3)$ and so
the abelianisation of the relation 
$(SR_2S^{-1}R_1R_2R_1^{-1})^{\frac{3p}{p-3}}$ follows from $R_2^p$.
\end{itemize}
Hence, the only relations in $\Gamma'$ are $R_1^p$, $R_2^p$, $S^3$ and
that the generators commute.
Therefore $\Gamma'=\langle R_1\rangle\times\langle R_2\rangle\times
\langle S\rangle$ as claimed.
\end{proof}

\medskip


\begin{thebibliography}{99}

\bibitem{BH} {\sc F.~Beukers \& G.~Heckman},
{\sl Monodromy for the hypergeometric function $_nF_{n-1}$},
Inventiones Math.\ {\bf 95} (1989) 325--354.

\bibitem{CHL} {\sc W.~Couwenberg, G.~Heckman \& E.~Looijenga},
{\sl Geometric structures on the complement of a projective arrangement},
Publ.\ Math.\ IHES {\bf 101} (2005) 69--161.

\bibitem{DM} {\sc P.~Deligne \& G.D.~Mostow},
{\sl Monodromy of hypergeometric functions and nonlattice integral monodromy},
Inst. Hautes \'Etudes Sci. Publ. Math. {\bf 63} (1986) 5--89.

\bibitem{Deraux-abelian} {\sc M.~Deraux},
{\sl Non-arithmetic lattices from a configuration of elliptic curves
in an Abelian surface},
Comm.\ Math.\ Helv.\ (to appear)
{\tt arXiv:1611.05112}.

\bibitem{Deraux-CHL} {\sc M.~Deraux},
{\sl Arithmeticity of the Couwenberg-Heckman-Looijenga lattices},
preprint {\tt arXiv:1710.04463}.

\bibitem{DPP-1} {\sc M.~Deraux, J.R.~Parker \& J.~Paupert}, 
{\sl New non-arithmetic complex hyperbolic lattices}, 
Inventiones Mathematicae {\bf 203} (2016) 681--771. 

\bibitem{DPP} {\sc M.~Deraux, J.R.~Parker \& J.~Paupert}, 
{\sl On commensurability classes of non-arithmetic complex hyperbolic lattices},
preprint {\tt arXiv:1611.00330}.

\bibitem{FMS} {\sc E.~Fuchs, C.~Mieri \& P.~Sarnak},
{\sl Hyperbolic monodromy groups for the hypergeometric equation and Cartan
involutions}.
Journal Eur.\ Math.\ Soc.\  {\bf 16} (2014) 1617--1671.

\bibitem{Le} {\sc A.H.M.~Levelt},
{\sl Hypergeometric functions}, 
Thesis, University of Amsterdam, 1961.

\bibitem{Mostow-RCP} {\sc G.D.~Mostow},
{\sl On a remarkable class of polyhedra in complex hyperbolic space}, 
Pacific Journal of Mathematics {\bf 86} (1980) 171--276.

\bibitem{Mostow-IHES} {\sc G.D.~Mostow},
{\sl Generalized Picard lattices arising from half-integral conditions}, 
Inst. Hautes \'Etudes Sci. Publ. Math. {\bf 63} (1986), 91--106.

\bibitem{PP} {\sc J.R.~Parker \& J.~Paupert},
{\sl Unfaithful complex hyperbolic triangle groups II: Higher order 
reflections}, 
Pacific Journal of Mathematics {\bf 239} (2009) 357--389. 

\bibitem{PS} {\sc J.R.~Parker \& L.-J.~Sun},
{\sl Complex hyperbolic triangle groups with 2-fold symmetry}, 
Proceedings of the International Geometry Center {\bf 10} (2017) 1--21. 

\bibitem{Picard} {\sc \'E.~Picard},
{\sl Sur les fonctions hyperfuchsiennes provenant des s\'eries 
hyperg\'eom\'etriques de deux variables}, 
Ann.\ Sci.\ \'Ecole Norm. Sup. {\bf 62} (1885) 357–384.

\bibitem{AP} {\sc A.~Pratoussevitch}, 
{\sl Traces in complex hyperbolic triangle groups}, 
Geometriae Dedicata {\bf 111} (2005) 159--185.

\bibitem{HAS} {\sc H.A.~Schwarz},
{\sl \"Uber diejenigen F\"alle, in welchen die Gaussische hypergeometrische 
Reihe eine algebraische Function ihres vierten Elementes darstellt}, 
J.\ Reine Angew.\ Math.\ {\bf 75} (1873) 292--335. 

\bibitem{ST} {\sc G.C.~Shephard \& J.A.~Todd},
{\sl Finite unitary reflection groups},
Canadian J.\ Maths.\ {\bf 6} (1954) 274--304.

\bibitem{SV} {\sc S.~Singh \& T.N.~ Venkataramana}, 
{\sl Arithmeticity of certain symplectic hypergeometric groups}, 
Duke Math.\ J.\ {\bf 163} (2014) 591--617.

\bibitem{KT-1} {\sc K.Takeuchi},
{\sl A characterization of arithmetic Fuchsian groups}, 
J.\ Math.\ Soc.\ Japan {\bf 27} (1975) 600--612.

\bibitem{KT} {\sc K.~Takeuchi},
{\sl Arithmetic triangle groups}, 
J.\ Math.\ Soc.\ Japan {\bf 29} (1977) 91--106. 

\bibitem{Thomae} {\sc J.~Thomae},
{\sl \"Uber die H\"oheren hypergeometrischen Reihen},
Math.\ Ann. {\bf 2} (1870) 427--444.

\bibitem{Thompson} {\sc J.M.~Thompson},
{\sl Complex Hyperbolic Triangle Groups}. PhD thesis, Durham University.

\bibitem{Vinberg} {\sc E.B~Vinberg},
{\sl Discrete groups generated by reflections in {L}oba\v cevski\u\i \ 
spaces},
Mat.\ Sbornik (Ner Series) {\bf 72 (114)} (1967) 471--488; 
correction, ibid. {\bf 73 (115)} (1967), 303; translated as 
Math.\ USSR Sbornik {\bf 1} (1968) 429--444.

\bibitem{WW} {\sc E.T.~Whittaker \& G.N.~Watson},
{\sl Modern Analysis}, Cambridge University Press.

\end{thebibliography}
\end{document}